\documentclass[12pt]{amsart}
\usepackage{amsmath,amsfonts,amssymb,hyperref}

\textwidth 5.7in
\evensidemargin +0.2in
\oddsidemargin +0.2in
\textheight 9in \topmargin -0.45in

\newtheorem{theorem}{Theorem}

\newtheorem{lemma}[theorem]{Lemma}
\newtheorem{proposition}[theorem]{Proposition}

\newtheorem{corollary}[theorem]{Corollary}

\newtheorem{question}[theorem]{Question}

\newtheorem*{claim}{Claim}

\theoremstyle{definition}

\theoremstyle{remark}

    \def\ll{\langle} \def\rr{\rangle}
    \def\bp{\begin{pmatrix}}
\def\smallsetminus{\setminus} \def\ep{\end{pmatrix}}
\def\bn{\begin{enumerate}}

\def\mid{|}
   \def\en{\end{enumerate}}
\def\ba{\begin{array}} \def\ea{\end{array}}

\def\ker{\operatorname{Ker}}\def\be{\begin{equation}} \def\ee{\end{equation}}

\def\co{\colon}\def\GG{\mathcal{G}} 
\def\PP{\mathcal{P}}\def\HH{\mathcal{H}}

\def\CC{\mathcal{C}}

\def\MM{\mathcal{M}}

\def\PP{\mathcal{P}}

\def\op{\operatorname}
\def\ol{\overline}

\newcommand{\Z}{\mathbb{Z}}

\input xy
\xyoption{all}

\title{The membership problem for $3$-manifold  groups is solvable}

\author{Stefan Friedl}
\address{Fakult\"at f\"ur Mathematik\\ Universit\"at Regensburg\\   Germany}
\email{sfriedl@gmail.com}

\author{Henry Wilton}
\address{Department of Mathematics, University College London, UK}
\email{hwilton@math.ucl.ac.uk}

\begin{document}

\begin{abstract}
We show that the Membership Problem for finitely generated subgroups of 3-manifold groups is solvable.
\end{abstract}

\maketitle

\section{Introduction}

The classical group-theoretic decision problems were formulated by Dehn \cite{dehn_unendliche_1911} in work on the topology of surfaces.  He considered in particular the following questions about finite presentations $\langle A\mid R\rangle$ for a group $\pi$:
\bn
\item the Word Problem, which asks for an algorithm to determine whether or not a word in the generators $A$ represents the trivial element of $\pi$;
\item the Conjugacy Problem, which asks for an algorithm to determine whether or not two words in the generators $A$ represent conjugate elements of $\pi$.
\en
In this context another question  arises naturally:
\bn
\item[(3)] the  Membership Problem, where the goal is to determine whether a given element of a group lies in a specified subgroup.
\en
Note that a solution to the Conjugacy Problem and also a solution to the Membership Problem each give a solution to the Word Problem. 
The initial hope that these problems might always be solvable were dashed by Novikov \cite{No55} and Boone \cite{Bo58} who  showed that there exist finitely presented groups with unsolvable  Word Problem. 

It is therefore natural to ask for which classes of groups the above problems can be solved.
In this paper we will discuss the case of 3-manifold groups, i.e.\  fundamental groups of compact 3-manifolds.
Our understanding of 3-manifold groups has expanded rapidly over the recent years. In particular it is a consequence of the Geometrization Theorem due to Perelman
\cite{Pe02,Pe03a,Pe03b,MT07} and work of Hempel \cite{hempel_residual_1987} that 3-manifold groups are residually finite, which gives rise to a solution to the Word Problem.
Furthermore, by \cite{hamilton_separability_2011}, fundamental groups of orientable $3$-manifolds are `conjugacy separable',
which gives rise to a solution to the Conjugacy Problem (see also Pr\'eaux \cite{Pr06,Pr12} and Sela \cite{Sel93} for an earlier solution.)

The Word Problem and the Conjugacy Problem for fundamental groups of orientable 3-manifold groups can thus be solved by translating the problem to dealing with the corresponding problems for finite groups,
which in turn can be solved trivially. The same approach cannot work for the Membership Problem since $3$-manifold groups are not  `subgroup separable', see e.g.\ \cite{BKS87,NW01}.
The main goal of this paper is to show that the Membership Problem has nonetheless a uniform 
solution  for 3-manifold groups. More precisely, we have the following theorem.

\begin{theorem}\label{mainthm}
There exists an algorithm which takes as input a finite presentation 
$\pi=\ll A\mid R\rr$ of a 3-manifold group, a finite set of words  $w_1,\dots,w_k$ in $A$ and a word $z$ in $A$ and which determines whether or not the element  $z\in \pi$ lies in the subgroup of $\pi$ generated by $w_1,\dots,w_k$. 
\end{theorem}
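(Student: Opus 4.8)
The plan is to reduce the Membership Problem for a general $3$-manifold group to a combination of geometric decomposition, known separability results, and effective versions of the subgroup structure of graph manifolds and geometric pieces. First I would preprocess: given a finite presentation $\ll A\mid R\rr$ which we are promised is a $3$-manifold group, I need to recognize the underlying manifold effectively. Here I would invoke the algorithmic consequences of Geometrization — one can algorithmically produce a triangulation of a compact $3$-manifold $M$ with $\pi_1(M)\cong\pi$ and compute the JSJ/geometric decomposition, passing to the orientable case by taking the orientation double cover if necessary (a finite-index subgroup, which does not affect solvability of the Membership Problem). This turns the input into honest topological data: a graph of groups decomposition of $\pi$ along tori and Klein bottles, with vertex groups that are Seifert fibered or hyperbolic.

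Next I would handle the geometric pieces. For a hyperbolic piece, the crucial tool is that its fundamental group is \emph{LERF} (subgroup separable), by the work of Agol, Wise et al.\ on virtual specialness; combined with the solution to the Word Problem this yields an algorithm for the Membership Problem in such groups (enumerate coset representatives / fellow-traveler arguments on one side, and via the finite quotient separating the subgroup from a non-member on the other). For Seifert fibered pieces the fundamental group is essentially (a central extension of) a Fuchsian group, which is also LERF, and again the Membership Problem is solvable. The remaining work is to run the Membership Problem across the graph of groups: given $z$ and $w_1,\dots,w_k$, I would work with the Bass--Serre tree, compute the normal forms of the $w_i$ and of $z$, and decide membership in the subgroup $H=\ll w_1,\dots,w_k\rr$ by analyzing the action of $H$ on the tree — this is where one needs effective control of edge groups ($\Z^2$ or Klein bottle groups) and of the double-coset intersections of $H$ with vertex and edge stabilizers.

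The main obstacle I expect is precisely this last step: $3$-manifold groups are \emph{not} LERF in general, so there is no hope of a naive "separate by a finite quotient" argument globally, and one must genuinely exploit the graph-of-groups structure. Concretely, the difficulty is to decide, given the pieces of the normal form of an arbitrary element $g$ of $H$ relative to the tree, whether those pieces can be reshuffled (using the relations at the vertices and the amalgamations along edges) to produce $z$; controlling how $H$ meets the peripheral $\Z^2$'s — i.e.\ understanding $H\cap (\text{edge group})$ and the induced coset structure — is the technical heart. I would address it by using that the edge and vertex groups are themselves LERF and that the relevant peripheral subgroups are separable in the vertex groups (this is what makes the amalgam "efficient"), which lets one effectively enumerate the finitely many relevant double cosets and thereby decide membership. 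A secondary, more routine obstacle is bookkeeping: ensuring every appeal to Geometrization, to the virtual specialness machinery, and to conjugacy/subgroup separability is genuinely \emph{algorithmic} and uniform in the input presentation, rather than merely an existence statement.
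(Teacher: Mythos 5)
Your overall strategy --- pass to an orientable situation, recognise the manifold and its geometric decomposition by enumeration of triangulations, solve the Membership Problem inside the Seifert fibred and hyperbolic pieces, and then propagate membership across the graph of groups by controlling how the subgroup $H=\langle w_1,\dots,w_k\rangle$ meets edge and vertex groups --- is the same skeleton as the paper's proof, which delegates the graph-of-groups step to the theorem of Kapovich--Weidmann--Miasnikov. But the step you yourself identify as the technical heart is exactly where your proposal has a genuine gap. To run the graph-of-groups machinery you need, for a finitely generated subgroup $H$ of a vertex group $\pi_1(N)$: (a) an algorithm deciding membership in the double coset $PH$, where $P$ is a peripheral $\Z^2$; and (b) an algorithm that outputs a finite generating set for $H\cap P$. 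Neither follows from the LERF-ness of the vertex groups and the separability of $P$ that you invoke. For (a), $PH$ is a product of subgroups, not a subgroup, so one needs separability of the double coset itself; the paper quotes Niblo for Seifert fibred pieces and Hruska together with Wise for hyperbolic pieces. For (b), separability plus enumeration lets you certify, for any single element of $P$, whether it lies in $H$, but it gives no termination criterion telling you when you have found all of $H\cap P$: the candidate subgroups of $\Z^2$ form an infinite family, and you cannot test infinitely many primitive elements.

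The paper closes this gap with the Tameness Theorem of Agol and Calegari--Gabai, which your proposal never mentions and which the paper explicitly calls indispensable: it is needed to control geometrically infinite subgroups. Tameness yields the dichotomy that either $H$ is relatively quasiconvex in $\pi_1(N)$, in which case (by the Virtually Compact Special Theorem together with Haglund's work) $H$ is a virtual retract, so one can search for and certify a finite-index subgroup $\pi_0\supseteq H$ and a retraction $r\colon\pi_0\to H$, after which $H\cap P$ is computed from $r(P_0)\cap P_0$ using maximality of peripheral abelian subgroups and commutative transitivity; or $H$ is a virtual fibre, i.e.\ the kernel of a computable homomorphism $\pi_0\to\Z$, and $H\cap P$ is obtained by elementary linear algebra. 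Without this dichotomy (or some substitute for it), your plan to ``effectively enumerate the finitely many relevant double cosets'' has no way to terminate, so as written the argument does not go through. (Your secondary bookkeeping concerns --- making Geometrization and the recognition of the manifold algorithmic --- are handled in the paper by enumerating triangulations, applying the algorithms of Jaco--Tollefson and Jaco--Letscher--Rubinstein, and searching for an isomorphism of presentations, so that part of your outline is sound; note also that the paper first reduces to closed, orientable, prime manifolds via doubling and the Kneser--Milnor decomposition, a reduction your outline skips but which is comparatively routine.)
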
 

The four main ingredients in the proof are the following:
\bn
\item The resolution of the Tameness Conjecture by Agol \cite{Ag04} and Calegari--Gabai \cite{CG06}.
\item An algorithm of Kapovich--Weidmann--Miasnikov
 \cite{KWM05} which deals with the Membership Problem of the fundamental group of a graph of groups.
\item Algorithms of Jaco--Letscher--Rubinstein \cite{JLR02}, Jaco--Rubinstein \cite{jaco_efficient_2003} and Jaco--Tollefson \cite{jaco_algorithms_1995}
which determine the prime decomposition and the JSJ decomposition of a given triangulated 3-manifold.
\item The Virtually Compact Special Theorem of Agol \cite{Ag13} and Wise \cite{Wi12a,Wi12b}.
\en

The paper is organized as follows.  In Section \ref{section:wordandconjugacy} we recall the solutions to the Word Problem and the Conjugacy Problem for 3-manifold groups. In Section \ref{section:mainthm} we discuss our main theorem in more detail.  In Section \ref{section:basicalgorithms} we recall several basic results and algorithms, and we give the proof of our main theorem in Section \ref{section:proofmp}.  Finally, in Section \ref{section:alternative} we will quickly discuss a slightly different approach to the proof of our main theorem and we will raise a few questions.

\subsection*{Conventions.} All $3$-manifolds and surfaces are assumed to be compact
 and connected.  We furthermore assume that all graphs are connected. Finally we assume that all classes of groups are closed under isomorphism.

\section{The Word Problem and the Conjugacy Problem for 3-manifold groups}
\label{section:wordandconjugacy}

We start out with introducing several definitions which we will need throughout this paper.
\bn
\item
Given a set $A$ we denote by $F(A)$ the free group generated by $A$.
As usual we will freely go back and forth between words in $A$ and elements represented by these words in $F(A)$.
\item
A \emph{finite presentation} $\ll A\mid R\rr$ is  a finite set $A$ together 
with a finite set $R$ of elements in $F(A)$.  We follow the usual convention that by $\ll A\mid R\rr$ 
 we indicate at the same time the 
finite data and also the group 
\[ F(A)/\ll \ll R\rr \rr,\]
i.e.\ the quotient of $F(A)$ by the normal closure $\ll \ll R\rr\rr$ of $R$ in $F(A)$. In the notation we will for the most part  not distinguish between elements in $F(A)$ and the elements they represent in $\ll A\mid R\rr$.
\item A \emph{finite presentation for a group $\pi$} is a finite presentation $\ll A\mid R\rr$ such that $\pi\cong \ll A\mid R\rr$. 
We say that a group $\pi$ is \emph{finitely presentable} if it admits a finite presentation.
\en

Before we start with the solution to the Membership Problem for 3-manifold groups
it is worth looking at the solution to the Word Problem and to the Conjugacy Problem.

Recall that a group $\pi$ is called \emph{residually finite} if given any non-trivial $g\in \pi$ there exists a homomorphism $f\colon \pi\to G$ to a finite group $G$ such that $f(g)$ is non-trivial.
It is  consequence 
of the Geometrization Theorem \cite{Th82,Pe02,Pe03a,Pe03b} and of  work of Hempel \cite{hempel_residual_1987}
that 3-manifold groups are residually finite.

The following well-known lemma thus gives a solution to the Word Problem for 3-manifold groups.

\begin{lemma}\label{lem:resfinite}
There exists an algorithm which takes as input  a finite presentation $\pi=\ll A\mid R\rr$ 
and an element $w\in F(A)$, and which, if $\pi$ is residually finite, determines whether or not $w$  represents the trivial element.
\end{lemma}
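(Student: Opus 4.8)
The plan is to run two semi-decision procedures in parallel and argue that, under the hypothesis that $\pi$ is residually finite, exactly one of them terminates. Given the finite presentation $\pi = \ll A \mid R \rr$ and the word $w \in F(A)$, the first procedure systematically enumerates all consequences of the relators: it lists all products of conjugates $\prod_i u_i r_i^{\pm 1} u_i^{-1}$ with $r_i \in R$ and $u_i \in F(A)$, freely reduces each, and checks whether it equals $w$ as an element of $F(A)$. This halts precisely when $w$ lies in $\ll\ll R\rr\rr$, i.e.\ when $w$ represents the trivial element of $\pi$. The second procedure systematically enumerates all finite groups $G$ (say, as multiplication tables on $\{1,\dots,n\}$ for $n = 1, 2, \dots$) together with all set maps $A \to G$; for each such map it checks whether sending each $a \in A$ to its image extends to a homomorphism $f \colon \pi \to G$, which amounts to verifying that $f(r) = 1_G$ for every $r \in R$ (a finite check, since $R$ is finite and $G$ is finite); and if so, it computes $f(w)$ and halts if $f(w) \neq 1_G$. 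This second procedure halts precisely when there is a finite quotient of $\pi$ in which the image of $w$ is nontrivial.

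Now suppose $w$ represents the trivial element of $\pi$. Then $f(w) = 1_G$ for every homomorphism $f$ to every finite group, so the second procedure never halts, while the first one does. Conversely, suppose $w$ represents a nontrivial element $g \in \pi$. If $\pi$ is residually finite, then by definition there is a homomorphism $f \colon \pi \to G$ onto a finite group with $f(g) \neq 1_G$; this $G$ and $f$ (or an isomorphic copy appearing in the enumeration) will eventually be encountered, so the second procedure halts, whereas the first never does. Thus when $\pi$ is residually finite, running both procedures in parallel (dovetailing the steps, e.g.\ alternating or interleaving by a standard diagonal schedule) is guaranteed to terminate, and we output ``trivial'' if the first halts and ``nontrivial'' if the second does. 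Note that if $\pi$ is \emph{not} residually finite the algorithm may fail to terminate on a nontrivial $w$, but this is permitted by the statement.

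The only genuinely delicate point is making precise that each individual step is effective: enumerating finite presentations of finite groups and homomorphisms out of a free group is routine, and checking $f(r) = 1_G$ and $f(w) = 1_G$ in a finite group is immediate once the generator images are fixed; similarly, enumerating elements of $\ll\ll R\rr\rr$ and comparing freely reduced words in $F(A)$ is elementary. So there is no real obstacle here — the content of the lemma is entirely the observation that residual finiteness makes the ``nontrivial'' side semi-decidable, while the ``trivial'' side is semi-decidable for any finitely presented group, and a problem that is semi-decidable together with its complement is decidable.
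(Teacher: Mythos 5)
Your proposal is correct and is essentially identical to the paper's proof: both run in parallel the enumeration of products of conjugates of relators (certifying triviality) and the enumeration of homomorphisms to finite groups (certifying nontriviality via residual finiteness). The extra detail you give about enumerating finite groups as multiplication tables and verifying homomorphisms on generators is just a spelled-out version of the paper's ``recursively enumerate all homomorphisms to finite groups.''
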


\begin{proof}
Let $\pi=\ll A\mid  R\rr$  be a finitely presented group and let $w\in F(A)$. We suppose that $\pi$ is residually finite.
If $w$ represents the trivial element, then we will see this eventually by writing down systematically all words which are products of conjugates of elements in $R\cup R^{-1}$.  On the other hand, if $w$ does not represent the trivial word, then by residual finiteness there exists a homomorphism $f\colon \pi\to G$ to a finite group with $f(g)\ne e$. 
  Since $\ll A\mid  R\rr$ is a finite presentation we can recursively enumerate all homomorphisms from $\pi$ to finite groups. 
  After finitely many steps we will thus  detect that $g$ is indeed non-trivial.
\end{proof}

As we mentioned in the introduction, 
Pr\'{e}aux, extending Sela's work on knot groups \cite{Sel93}, proved that the Conjugacy Problem is solvable for the fundamental groups of orientable \cite{Pr06} and non-orientable \cite{Pr12} 3-manifolds.  (Note that, in contrast to many other group properties, solvability of the Conjugacy Problem does not automatically pass to finite extensions \cite{CM77}.)  

It is natural to ask whether there also exists a solution to the Conjugacy Problem along the lines of Lemma \ref{lem:resfinite}.  In the following we say that a group $\pi$ is \emph{conjugacy separable} if, given any non-conjugate $g,h\in \pi$, there exists a homomorphism $f\colon \pi\to G$ to a finite group $G$ such that $f(g)$ and $f(h)$ are non-conjugate.  A slight variation on the proof of Lemma \ref{lem:resfinite} also shows that the Conjugacy Problem is solvable if the given group is  conjugacy separable.

Hamilton, the second author and Zalesskii,
 building on the recent work of Agol \cite{Ag13} and Wise \cite{Wi12a,Wi12b}
  and work of Minasyan \cite{Min12} showed that fundamental groups  of  orientable 3-manifold $N$ are conjugacy separable. This result gives another solution to the  Conjugacy Problem for fundamental groups of \emph{orientable} 3-manifolds.  
  
\section{The statement of the main theorem}\label{section:mainthm}

Let $\CC$ be  a class of finitely presentable groups.  We say that the \emph{Membership Problem is solvable in $\CC$}  if there exists an algorithm which takes as input a finite presentation  $\ll A\mid R\rr$, a finite set of words  $w_1,\dots,w_k$ in $A$ and a word $z$ in $A$ and which, if $\ll A\mid R\rr$ is a presentation for a group $\pi$ in $\CC$, determines whether or not the element  $z\in \pi$ lies in the subgroup  generated by $w_1,\dots,w_k\in \pi$.

The following theorem is now a reformulation of our main theorem.

\begin{theorem} \label{thm: MP}\label{thm:mp}
The Membership Problem is solvable for the class of 3-manifold groups.
\end{theorem}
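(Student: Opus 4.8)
The plan is to convert the given presentation into a geometric decomposition of a 3-manifold and then to invoke the membership algorithm of Kapovich--Weidmann--Miasnikov \cite{KWM05} for graphs of groups. First I would exploit the fact that $\ll A\mid R\rr$ is promised to present a 3-manifold group: enumerating in parallel all finite triangulations $T$ of compact 3-manifolds and all finite sequences of Tietze transformations, one computes from the $2$-skeleton of each $T$ a presentation of $\pi_1(|T|)$ and checks the purely syntactic condition that the given Tietze sequence carries it to $\ll A\mid R\rr$; by Tietze's theorem this search halts and produces a compact 3-manifold $N$ together with an explicit isomorphism $\pi_1(N)\xrightarrow{\,\sim\,}\ll A\mid R\rr$, through which $w_1,\dots,w_k$ and $z$ may be transported. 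It then suffices to solve the membership problem in $\pi_1(N)$ starting from a triangulation of $N$.

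Next I would run the algorithms of Jaco--Letscher--Rubinstein, Jaco--Rubinstein and Jaco--Tollefson \cite{JLR02,jaco_efficient_2003,jaco_algorithms_1995} to compute the prime decomposition of $N$ and the JSJ decomposition of its irreducible summands, and then assemble from this an explicit presentation of $\pi_1(N)$ as the fundamental group of a finite graph of groups $\mathbb{G}$ whose edge groups are trivial (the connected-sum spheres) or isomorphic to $\Z^2$ or to the Klein bottle group (the JSJ tori and Klein bottles), and whose vertex groups are fundamental groups of Seifert fibered or hyperbolic pieces --- the remaining possibilities, which have finite, free, or virtually polycyclic (e.g.\ $\Sol$) fundamental group, being dealt with directly. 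Where convenient one may pass to the orientation double cover of an individual piece, using that both solvability of the membership problem and subgroup separability pass to finite extensions.

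The heart of the argument is to verify that $\mathbb{G}$ meets the hypotheses of the Kapovich--Weidmann--Miasnikov algorithm. Its edge groups are polycyclic, so they cause no internal difficulty; what must be supplied is (a) a uniform solution of the membership problem in each vertex group $G_v$, and (b) effective control, inside each $G_v$, of how a finitely generated subgroup meets the incident peripheral ($\Z^2$- or Klein-bottle-) subgroups and their conjugates --- equivalently, an effective form of separability of these subgroups and of the relevant double cosets --- which is what makes the folding process of \cite{KWM05} terminate. For (a) the key observation is that a subgroup-separable (LERF) vertex group has solvable membership problem: one semi-decides $z\in\ll w_1,\dots,w_k\rr$ by enumerating words in the $w_i^{\pm1}$, using solvability of the word problem in $G_v$ (cf.\ Lemma \ref{lem:resfinite}), and one semi-decides $z\notin\ll w_1,\dots,w_k\rr$ by enumerating the finite quotients $q\colon G_v\to Q$ and testing whether $q(z)$ lies in the finite subgroup $q(\ll w_1,\dots,w_k\rr)$; separability of $\ll w_1,\dots,w_k\rr$ guarantees that exactly one of the two procedures halts. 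Now Seifert fibered 3-manifold groups are LERF by Scott's theorem, and hyperbolic 3-manifold groups are LERF by combining the Tameness Theorem \cite{Ag04,CG06} with Canary's covering theorem --- so that every finitely generated subgroup is geometrically finite, hence relatively quasiconvex, or else a virtual fibre --- together with the Virtually Compact Special Theorem \cite{Ag13,Wi12a,Wi12b} and the known separability of relatively quasiconvex subgroups of virtually special groups, the virtual fibres being separable by a direct argument.

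I expect (b) to be the main obstacle. Being only an existence statement, LERF does not by itself make the intersections of finitely generated subgroups with peripheral subgroups and their conjugates computable, yet this is precisely the effective input the folding machinery of \cite{KWM05} needs. For Seifert pieces it can be extracted from the explicit description of $G_v$ as a virtually central extension of a $2$-orbifold group, together with effective double-coset separability; for hyperbolic pieces one must instead render effective the cubical/geometric picture --- a relatively quasiconvex subgroup and a peripheral subgroup have a computable, again relatively quasiconvex, intersection --- which is the step that requires genuine work rather than mere citation. Once (a) and (b) are established for every vertex group, feeding $\mathbb{G}$ together with the transported data into the Kapovich--Weidmann--Miasnikov algorithm decides whether $z\in\ll w_1,\dots,w_k\rr$, completing the proof.
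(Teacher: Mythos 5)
Your overall strategy is the same as the paper's: realise the given group as the fundamental group of a graph of groups coming from the prime and JSJ decompositions, and feed this into the Kapovich--Weidmann--Miasnikov machinery (Theorem \ref{thm:kmw05}), with Scott's theorem handling Seifert pieces and Tameness plus the Virtually Compact Special Theorem handling hyperbolic pieces. However, at the decisive point you stop: what you call item (b) --- the effective computation, inside a vertex group, of the intersection of a finitely generated subgroup with a peripheral $\Z^2$ subgroup (Condition V of Theorem \ref{thm:kmw05}), together with the double-coset membership condition (Condition I) --- is exactly the content the theorem needs beyond citations, and you explicitly defer it (``requires genuine work rather than mere citation''). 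So as it stands the argument is incomplete precisely where the paper has to work. For the record: Condition I does follow from mere citation plus the enumeration trick of Lemma \ref{lem:resfinite}, since double-coset separability is known both for Seifert fibred groups (Niblo, Theorem \ref{thm:sfsdoublecosetseparable}) and for hyperbolic $3$-manifold groups (Hruska plus Wise, Theorem \ref{thm:hypdoublecosetseparable}). Condition V is where the paper's real argument lives (Lemmas \ref{lem: Intersections for hyperbolic manifolds}--\ref{lem: Intersections for SF manifolds} and Proposition \ref{prop:intersectionsboundary}): by Theorems \ref{thm:tameness} and \ref{thm:hypsubgroup} a finitely generated subgroup $\Gamma$ of a hyperbolic piece is either a virtual retract or a virtual fibre, and \emph{both} alternatives can be found by a na\"ive search (Lemmas \ref{lem:allfiniteindexsubgroups}, \ref{lem:findretract}, \ref{lem:givesdesiredquotient}); in the fibre case $\Gamma\cap P$ is a kernel computable by linear algebra, and in the retract case one writes $\Gamma\cap P=r(P_0)\cap P_0$ and decides whether $[r(P_0),P_0]=1$, using that peripheral subgroups are maximal abelian and centralizers of nontrivial elements are abelian; the Seifert case is reduced to $S^1\times\Sigma$ covers and handled via commutative transitivity and the centre. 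Without some such argument (or an affirmative answer to Questions \ref{qu: RQC double coset membership}--\ref{qu: RQC intersection} type statements), the KWM algorithm cannot be invoked.

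A secondary, fixable issue: you run the prime-decomposition and JSJ algorithms of \cite{JLR02,jaco_efficient_2003,jaco_algorithms_1995} directly on the triangulated manifold you reconstruct, which may be non-orientable and may have boundary, whereas these algorithms (Theorem \ref{theorem: 3-manifolds are re} and \cite[Algorithm~8.1]{jaco_algorithms_1995}) are stated for closed orientable manifolds; JSJ pieces glued along Klein bottles and non-orientable vertex pieces would then need separate justification. The paper sidesteps this entirely by a group-theoretic reduction (Proposition \ref{prop:2}): pass to the orientation cover and double along the boundary, so that every $3$-manifold group is virtually a retract of the fundamental group of a \emph{closed orientable} manifold, and then use the Kneser--Milnor decomposition plus Lemma \ref{lem: MP passing to fi subgroups}; your remark that one ``may pass to the orientation double cover of an individual piece'' gestures at this but does not supply the retraction/finite-index bookkeeping that makes the reduction effective. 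Note also that the paper never reconstructs the specific manifold from the presentation as you do; it simply enumerates candidate decorated groups and graphs of groups and searches for an isomorphism (Lemma \ref{lem:findiso}), which achieves the same end with less geometric input.
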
 

In Section \ref{section:wordandconjugacy} we saw that separability properties of fundamental groups can be used to solve the Word Problem and the Conjugacy Problem for fundamental groups of (orientable) 3-manifold. 

The `right' notion of separability in the context of the Membership Problem is the separability of subgroups.
More precisely, in the following we say that a subset $\Gamma$ of a group $\pi$ is \emph{separable} if given any $g\not\in \Gamma$ there exists a homomorphism $f\colon\pi\to G$ to a finite group such that $f(g)\not\in f(\Gamma)$.
We say that a group $\pi$ is \emph{subgroup separable} if every finitely generated subgroup is separable.
The proof of Lemma \ref{lem:resfinite} can easily be modified to show that the Membership Problem is solvable for the class of finitely presentable groups which are subgroup separable.

Scott \cite{Sc78} showed that the fundamental groups of Seifert fibred
 3-manifolds are subgroup separable. Furthermore, it follows from 
work of Agol \cite{Ag13} and Wise \cite{Wi12a,Wi12b}, together with the proof of the Tameness Conjecture by Agol \cite{Ag04} and Calegari--Gabai \cite{CG06}
that fundamental groups of hyperbolic 3-manifolds are subgroup separable.
(The precise references for this statement can be found in \cite{AFW12}.)

On the other hand, there are many examples of fundamental groups of prime 3-manifolds which are not subgroup separable, see e.g. \cite{BKS87,NW01}.
 We thus see that we cannot hope to prove Theorem \ref{thm: MP} in the general case
by appealing to separability properties only. 

The key idea in the proof of Theorem \ref{thm:mp}
is to apply a theorem of Kapovich--Weidmann--Miasnikov \cite{KWM05}
which provides a solution to the  Membership Problem 
for fundamental groups of graphs of groups if various conditions are satisfied.
We will apply this theorem twice, once to reduce the problem to the case of prime 3-manifolds, and then later on to deal with the case of prime 3-manifolds with non-trivial JSJ decomposition.

In Section  \ref{section:basicalgorithms} we will first make some preliminary observations.
In Section \ref{section:kmw} we formulate the aforementioned main theorem of \cite{KWM05}
which we will use in Section \ref{section:proofmpi} to  argue that it suffices to prove our main theorem for  closed, orientable, prime 3-manifolds.
In Sections \ref{section:jsj}, \ref{section:subgroupshypsfs} and  \ref{section:proofcomputegeneratingssets} we will show that one can use 
the main theorem of \cite{KWM05} to deal with the fundamental groups of closed, orientable, prime 3-manifolds with non-trivial JSJ decomposition.

\section{Preliminary results}\label{section:basicalgorithms}

\subsection{Basic algorithms}

We start out with several basic lemmas which we will need time and again during the paper. 
The statements of the lemmas are well-known to experts, but we include proofs for the reader's convenience.
At a first reading of the paper it might nonetheless be better to skip this section.

\begin{lemma}\label{lem:findiso}
There exists an algorithm which takes as input  two finite presentations  $\ll A\mid  R\rr$ and $\ll A'\mid  R'\rr$
and which  finds an isomorphism  $\ll A\mid  R\rr\to \ll A'\mid  R'\rr$, if  such an isomorphism exists.
\end{lemma}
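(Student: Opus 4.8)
The statement to prove, Lemma~\ref{lem:findiso}, asserts the existence of an algorithm which, given two finite presentations, finds an isomorphism between the two groups if one exists. The standard approach is a naive two-sided search: on the one hand, enumerate candidate isomorphisms and verify them using the (partial) algorithm that detects triviality of words; on the other hand, enumerate proofs that no isomorphism exists. The subtlety is that the general isomorphism problem for finitely presented groups is \emph{unsolvable}, so the algorithm here must be allowed to run forever when no isomorphism exists — the statement only promises that it halts and produces an isomorphism \emph{if} one exists. That observation is what makes the lemma provable at all, and it should be flagged.

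\medskip

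\textbf{Plan.} I would set up a single enumeration that interleaves two kinds of search steps. First, systematically list all pairs of maps $(\phi, \psi)$ where $\phi\colon F(A)\to F(A')$ is given by assigning to each generator in $A$ a word in $A'$, and $\psi\colon F(A')\to F(A)$ is given by assigning to each generator in $A'$ a word in $A$; since $A$ and $A'$ are finite, these assignments form a recursively enumerable set. For a given pair, I want to certify four things: (i) $\phi$ descends to a homomorphism $\ll A\mid R\rr\to\ll A'\mid R'\rr$, i.e.\ $\phi(r)=1$ in $\ll A'\mid R'\rr$ for every $r\in R$; (ii) likewise $\psi(r')=1$ in $\ll A\mid R\rr$ for every $r'\in R'$; (iii) $\psi\circ\phi = \id$ on $\ll A\mid R\rr$, i.e.\ $\psi(\phi(a))a^{-1}=1$ in $\ll A\mid R\rr$ for every $a\in A$; (iv) $\phi\circ\psi=\id$ on $\ll A'\mid R'\rr$, similarly. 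Each of these is a finite list of assertions that a specific word represents the trivial element in one of the two groups. Each such assertion, \emph{when true}, is verified after finitely many steps by the ``write down all products of conjugates of relators'' half of the proof of Lemma~\ref{lem:resfinite} (note we do not need residual finiteness here — only the positive half of that search, which always succeeds when the word is genuinely trivial). So I run, in a diagonal fashion, the triviality-search for every word arising in (i)--(iv) for every candidate pair $(\phi,\psi)$, and the first pair for which all the relevant words are certified trivial yields mutually inverse homomorphisms, hence an isomorphism, which the algorithm outputs.

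\medskip

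\textbf{Why this works and where the only real issue lies.} Correctness of a positive output is immediate: conditions (i)--(iv), once all verified, literally say $\phi$ and $\psi$ are well-defined group homomorphisms that are two-sided inverses, so $\ll A\mid R\rr\cong\ll A'\mid R'\rr$ via $\phi$. Completeness — that the algorithm eventually halts whenever an isomorphism exists — is the point to argue carefully: if $\theta\colon\ll A\mid R\rr\to\ll A'\mid R'\rr$ is an isomorphism, choose for each $a\in A$ a word $\phi(a)\in F(A')$ representing $\theta(a)$ and for each $a'\in A'$ a word $\psi(a')\in F(A)$ representing $\theta^{-1}(a')$; then the finitely many words appearing in (i)--(iv) are all genuinely trivial in the respective groups, so each of their triviality-searches terminates, hence the whole bundle of searches for this particular pair $(\phi,\psi)$ terminates, and since the pair $(\phi,\psi)$ appears at some finite stage of the enumeration, the diagonal procedure reaches it. The main (and essentially only) obstacle is conceptual rather than technical: one must resist expecting the algorithm to terminate in the negative case, since the isomorphism problem for finitely presented groups is undecidable; the lemma is a ``semi-decision'' statement and the proof must be phrased to make that explicit, exactly as the proof of Lemma~\ref{lem:resfinite} was only a semi-decision for triviality in the absence of residual finiteness.
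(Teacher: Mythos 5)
Your proposal is correct and follows essentially the same route as the paper: enumerate candidate pairs of homomorphisms $(\phi,\psi)$ between the free groups and semi-decide, by enumerating products of conjugates of relators, that the finitely many words expressing well-definedness and mutual inverseness are trivial, which terminates whenever an isomorphism exists. Your explicit remark that this is only a semi-decision procedure (consistent with the undecidability of the isomorphism problem) matches the paper's formulation of the lemma.
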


Here, by `finds an  isomorphism  $\ll A\mid  R\rr\to \ll A'\mid  R'\rr$' we mean that 
the algorithm finds a map $A\to F(A')$ which descends to an isomorphism
 $\ll A\mid  R\rr\to \ll A'\mid  R'\rr$.

\begin{proof}
Let $\pi=\ll A\mid  R\rr$ and $\pi'=\ll A'\mid  R'\rr$ be finite presentations.
We denote by $T$ the subgroup of $F(A)$ normally generated by $R$.
Similarly we define $T'$.

Note that a homomorphism $\varphi\co \ll A\rr \to \ll A'\rr$ descends to an isomorphism $\varphi\co \pi\to \pi'$ if and only if the following two conditions hold:
\bn
\item for all $r\in R$ we have $\varphi(r)\in T'$, and
\item there exists a
homomorphism $\psi\co \ll A'\rr \to \ll A\rr$ such that $\psi(r')\in T$ for all $r'\in R'$, such that $\psi(\phi(g))g^{-1}\in T$ for all $g\in A$ and  such that $\phi(\psi(g'))(g')^{-1}\in T'$ for all $g'\in A'$.
\en

We run the following Turing machines simultaneously:
\bn
\item A Turing machine which produces a list of all words in $A$ of $\pi$ which represent the trivial word, i.e.\ a Turing machine which produces a list of all elements of $T$.
\item A Turing machine which  produces a list of all homomorphisms $\varphi\co \ll A\rr\to \ll A'\rr$, i.e.\ a Turing machine which outputs all $\mid  A\mid  $-tuples of elements in $\langle A'\rangle$. 
\item A Turing machine which produces a list of all words in $A'$ of $\pi'$ which represent the trivial word.
\item A Turing machine which  produces a list of all homomorphisms $\varphi\co \ll A'\rr\to \ll A\rr$.
\en
Now suppose  there exists an isomorphism $\pi\to \pi'$.
It is  clear that after finitely many steps we will find a pair
 $\varphi\co \ll A\rr \to \ll A'\rr$ and  $\psi\co \ll A'\rr \to \ll A\rr$ such that the following conditions hold:
\bn 
 \item for all $r\in R$ we have $\varphi(r)\in T'$, 
 \item for all $r'\in R'$ we have $\psi(r')\in T$
 \item for all $g\in A$ we have  $\psi(\phi(g))g^{-1}\in T$, and
 \item for all $g'\in A'$ we have  $\phi(\psi(g'))(g')^{-1}\in T'$.
 \en
\end{proof}

In the following we say that a subgroup $\Gamma$  of a group $\pi$ is a \emph{retract}
if there exists a  retraction $r\colon \pi\to \Gamma$, i.e.\ a homomorphism with $r(g)=g$ for all $g\in \Gamma$.
Almost the same argument as in Lemma \ref{lem:findiso} also proves the following lemma.

\begin{lemma}\label{lem:findretract}
There exists an algorithm  which takes as input  two finite presentations  $\pi=\ll A\mid  R\rr$ and $\pi'=\ll A'\mid  R'\rr$ and which finds a map $f\colon \pi\to \pi'$ and a left-inverse $g\colon \pi'\to \pi$ to $f$,
if $\pi$ is isomorphic to a retract of $\pi'$.
\end{lemma}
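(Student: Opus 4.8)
The plan is to mimic the proof of Lemma \ref{lem:findiso}, adjusting the conditions that the pair of maps must satisfy. First I would observe that a homomorphism $f\colon \pi\to\pi'$ admits a left inverse $g\colon\pi'\to\pi$ precisely when the following holds: there exist set maps $\varphi\colon\langle A\rangle\to\langle A'\rangle$ and $\psi\colon\langle A'\rangle\to\langle A\rangle$ such that (1) $\varphi(r)\in T'$ for all $r\in R$ (so $\varphi$ descends to $f\colon\pi\to\pi'$), (2) $\psi(r')\in T$ for all $r'\in R'$ (so $\psi$ descends to $g\colon\pi'\to\pi$), and (3) $\psi(\varphi(a))a^{-1}\in T$ for all $a\in A$ (so that $g\circ f=\id_\pi$). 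Here, as in Lemma \ref{lem:findiso}, $T$ and $T'$ denote the normal closures of $R$ in $F(A)$ and of $R'$ in $F(A')$ respectively. Note the asymmetry with Lemma \ref{lem:findiso}: we drop the condition $\varphi(\psi(a'))(a')^{-1}\in T'$, since $f\circ g$ need not be the identity.

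Next I would run the same four Turing machines as in Lemma \ref{lem:findiso} simultaneously: one enumerating the elements of $T$, one enumerating all tuples in $\langle A'\rangle$ indexed by $A$ (i.e.\ all homomorphisms $\langle A\rangle\to\langle A'\rangle$), one enumerating the elements of $T'$, and one enumerating all homomorphisms $\langle A'\rangle\to\langle A\rangle$. If $\pi$ is isomorphic to a retract of $\pi'$, then composing an isomorphism $\pi\to\Gamma$ with the inclusion $\Gamma\into\pi'$ gives a map $f$ with a left inverse, hence witnesses $\varphi,\psi$ exist; after finitely many steps the enumeration will produce a pair $\varphi,\psi$ together with witnesses in $T$ and $T'$ certifying conditions (1)--(3), at which point the algorithm halts and outputs $f$ and $g$. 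Conversely, whenever the algorithm halts it has genuinely verified (1)--(3), so the output is correct.

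The main (and only real) subtlety is the usual semi-decidability caveat: membership in $T$ and in $T'$ is only recursively enumerable, not decidable, so we can confirm conditions (1)--(3) when they hold but can never conclude they fail; consequently the algorithm is guaranteed to terminate only under the hypothesis that $\pi$ is a retract of $\pi'$, and otherwise may run forever. This is exactly the same limitation as in Lemma \ref{lem:resfinite} and Lemma \ref{lem:findiso}, and it is harmless for the applications. I would also remark that the enumeration of elements of $T$ and $T'$ is effective because a presentation is finite data: one writes down systematically all products of conjugates of elements of $R\cup R^{-1}$ (respectively $R'\cup (R')^{-1}$), exactly as in the proof of Lemma \ref{lem:resfinite}.
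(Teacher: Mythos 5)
Your proposal is correct and takes essentially the same route as the paper, which proves this lemma by noting that ``almost the same argument as in Lemma \ref{lem:findiso}'' applies: one runs the same dovetailed enumeration of homomorphisms $F(A)\to F(A')$, $F(A')\to F(A)$ and of the normal closures $T$, $T'$, but only certifies your conditions (1)--(3), dropping the requirement $\varphi(\psi(a'))(a')^{-1}\in T'$ since only $g\circ f=\mathrm{id}_\pi$ is needed. Your observation that a witness pair exists whenever $\pi$ is isomorphic to a retract of $\pi'$ (compose the isomorphism with the inclusion, and the retraction with the inverse isomorphism) is exactly the point that guarantees termination, so there is no gap.
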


We also have the following lemma.

\begin{lemma}\label{lem:samesubgroup}
There exists an algorithm which takes as input a finite presentation  $\pi=\ll A\mid R\rr$
and two finite set of words $X$ and $Y$ in $A$, and which certifies that the two sets $X$ and $Y$ generate the same subgroup of $\pi$, if this is indeed the case.
\end{lemma}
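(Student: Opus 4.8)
The plan is to reduce the claim to finitely many positive membership facts, each of which can be \emph{certified} by exhibiting an explicit witness, and then to search for all the witnesses simultaneously; this is the same circle of ideas as in the proof of Lemma~\ref{lem:findiso}. Write $T\subseteq F(A)$ for the subgroup of $F(A)$ normally generated by $R$, so that two words in $A$ represent the same element of $\pi$ if and only if their ratio lies in $T$, and $T$ is precisely the set of products of conjugates of elements of $R\cup R^{-1}$. The key observation is that $\ll X\rr=\ll Y\rr$ in $\pi$ if and only if every $x\in X$ lies in $\ll Y\rr$ and every $y\in Y$ lies in $\ll X\rr$; and a word $w$ in $A$ represents an element of the subgroup generated by a finite set $Z$ of words exactly when there is a word $v$ in the alphabet $Z^{\pm 1}$ with $wv^{-1}\in T$. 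Both the guess of $v$ and the membership $wv^{-1}\in T$ are finite pieces of data that can be enumerated.

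Concretely, I would run four Turing machines at once: (1)~one that enumerates all elements of $T$, i.e.\ all products of conjugates of elements of $R\cup R^{-1}$; (2)~one that, for each $x\in X$, enumerates all words $v$ in the alphabet $Y^{\pm 1}$ and forms the words $xv^{-1}$; (3)~and~(4),~the analogous machines with the roles of $X$ and $Y$ exchanged. Whenever a word $xv^{-1}$ produced by machine~(2) also appears on the list produced by machine~(1), the element $x\in X$ has been certified to lie in $\ll Y\rr$; likewise machines~(3) and~(1) certify elements of $Y$ to lie in $\ll X\rr$. The algorithm halts and reports ``$\ll X\rr=\ll Y\rr$'' as soon as every $x\in X$ has been certified to lie in $\ll Y\rr$ and every $y\in Y$ has been certified to lie in $\ll X\rr$.

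For correctness one checks both directions. If $\ll X\rr=\ll Y\rr$ in $\pi$, then by definition of the subgroup generated by $Y$ each $x\in X$ equals, in $\pi$, some word $v$ in $Y^{\pm 1}$, so $xv^{-1}\in T$; this particular word $xv^{-1}$ is produced by machine~(2) after finitely many steps, and being in $T$ it is produced by machine~(1) after finitely many steps, so all the required certificates appear in finite time and the algorithm halts with the correct output; the symmetric statement handles $Y$. Conversely, the algorithm only halts after exhibiting genuine elements $xv^{-1}\in T$ and $y(v')^{-1}\in T$, which force $X\subseteq\ll Y\rr$ and $Y\subseteq\ll X\rr$ in $\pi$, so it never returns a false positive.

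There is no serious obstacle here; the only point worth emphasising is that this is a \emph{semi-}algorithm, which terminates precisely on the positive instances, and that this is all the statement asks for. In particular no solution to the Word Problem — and hence no residual-finiteness hypothesis as in Lemma~\ref{lem:resfinite} — is needed, because at no stage must we recognise that two words are \emph{un}equal in $\pi$; recognising the positive instances of word equality is exactly what the enumeration of $T$ in machine~(1) accomplishes.
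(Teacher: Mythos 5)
Your proof is correct and follows essentially the same approach as the paper: both reduce the claim to finitely many positive membership facts ($X\subset\ll Y\rr$ and $Y\subset\ll X\rr$ in $\pi$) and certify them by enumeration — the paper by listing the subgroups $\ll Y,\ll\ll R\rr\rr\,\rr$ and $\ll X,\ll\ll R\rr\rr\,\rr$ of $F(A)$, you by enumerating $T$ and words in $Y^{\pm1}$ (resp.\ $X^{\pm1}$) separately, which is the same semi-algorithm in slightly different packaging.
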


\begin{proof}
The algorithm enumerates all elements  in $\ll X,\ll \ll R\rr\rr\,\rr\subset F(A)$
and it enumerates all elements in $\ll Y,\ll \ll R\rr\rr\,\rr\subset F(A)$. Note that $X$ and $Y$ generate the same subgroup 
of $\pi$ if and only if 
$X\subset \ll Y,\ll\ll R\rr\rr\,\rr$ and $Y\subset \ll X,\ll \ll R\rr\rr\,\rr$. If this is indeed the case, then this will be verified after finitely many steps.
\end{proof}

\begin{lemma}\label{lem:finiteindexkernelcokernel}
There exists an algorithm which takes as input   two finite presentations  $\Gamma=\ll A\mid  R\rr$ and $\pi=\ll B\mid  S\rr$,
 a homomorphism  $f\colon \Gamma\to \pi$ and
a  set of elements in $\pi$ which generate a finite-index subgroup $\pi_0\subset \pi$, and which gives as output a  set of coset representatives for $\Gamma_0:=f^{-1}(\pi_0)$ in $\Gamma$ and
a finite presentation $\ll A_0\mid R_0\rr$ together with an isomorphism $\ll A_0\mid R_0\rr\to \Gamma_0$. 
\end{lemma}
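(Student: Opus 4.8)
The statement is a standard "effective Reidemeister–Schreier" result, so my plan is to reduce everything to explicitly building the coset action of $\pi$ on $\pi/\pi_0$ and then applying the classical Reidemeister–Schreier rewriting, keeping track of effectivity at each stage. First I would observe that, since $\pi_0$ is given by a finite generating set and has finite index, one can enumerate $\pi/\pi_0$ together with the permutation action of the generators $B$ on the cosets: run two Turing machines in parallel, one enumerating words in $F(B)$ together with their images in $\pi$ (using the presentation $\ll B\mid S\rr$ to detect relations), and one that, for a candidate transversal $\{t_1,\dots,t_n\}$ and candidate permutations $\sigma_b\in S_n$ for $b\in B$, checks the finitely many identities needed to certify that this is a genuine coset action in which $t_1=1$ represents $\pi_0$ (namely that each generator of $\pi_0$ fixes the coset of $1$, that the $\sigma_b$ respect all relators in $S$, and that the $t_i$ are pairwise non-equivalent while $\{t_i\sigma_b\}$ exhausts everything — each such check is itself decidable because $\pi$, being residually finite, or more simply because here we only need to verify membership in $\pi_0$, which is semidecidable). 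Since a correct transversal and action exist, this search halts and outputs them.

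Given the coset action of $\pi$ on the $n=[\pi:\pi_0]$ cosets, I would pull it back along $f\colon\Gamma\to\pi$ to get a transitive action of $\Gamma$ on $n'\le n$ cosets of $\Gamma_0=f^{-1}(\pi_0)$: concretely, compute the orbit of the base coset under the images $f(a)$, $a\in A$, which terminates after at most $n$ steps and yields a Schreier transversal $\{u_1,\dots,u_{n'}\}\subset F(A)$ for $\Gamma_0$ in $\Gamma$ together with the permutation action of $A$. These words $u_j$ (or rather $f(u_j)$ read back into $\pi$, reduced against the known transversal) are the desired coset representatives for $\Gamma_0$ in $\Gamma$. Then the Reidemeister–Schreier process applied to the finite presentation $\ll A\mid R\rr$ and this finite Schreier transversal produces, by a completely mechanical and finite computation, a finite presentation $\ll A_0\mid R_0\rr$ of $\Gamma_0$: the generators $A_0$ are the usual Schreier generators $u_j\,a\,\overline{u_ja}^{\,-1}$ indexed by non-trivial such products, and the relators $R_0$ are the rewrites of $u_j r u_j^{-1}$ for $r\in R$, $j=1,\dots,n'$. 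The isomorphism $\ll A_0\mid R_0\rr\to\Gamma_0$ is then the tautological one sending each Schreier generator symbol to the corresponding word in $F(A)$; by the Reidemeister–Schreier theorem this is well-defined and an isomorphism, and everything about it has been written down explicitly.

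**Main obstacle.** The only genuinely delicate point is the first step: given merely a generating set for the finite-index subgroup $\pi_0$ (not, a priori, a transversal or the coset action), how do we \emph{certify} that a guessed transversal and guessed generator-permutations are correct? The subtlety is that "$g\in\pi_0$" is semidecidable (enumerate products of conjugates of the generators of $\pi_0$ together with elements of $\ll\ll S\rr\rr$) but "$g\notin\pi_0$" is not obviously so. The resolution is that a \emph{correct} guess can be positively certified using only the semidecidable facts: we never need to verify a non-membership; transitivity plus the counting condition $\sum|\text{orbit}| = n$ forces the partition into cosets to be exactly right once every required membership $t_i b\, \overline{t_ib}^{\,-1}\in\pi_0$ and every relator-consistency $\sigma$ condition has been confirmed. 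So running the search for a guess all of whose finitely many certifying conditions are confirmed will halt, and correctness of the output is guaranteed. Beyond this, the argument is bookkeeping; I expect no conceptual difficulty in the Reidemeister–Schreier step itself, only care in making the output format match the claim (an explicit presentation \emph{plus} an explicit isomorphism \emph{plus} explicit coset representatives).
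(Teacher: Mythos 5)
Your argument is correct, and its overall architecture is the same as the paper's: enumerate-and-certify the finite-index data for $\pi_0\subset\pi$ using only the semidecidability of membership in $\ll X\rr$ modulo the relations, then transfer along $f$ and finish with Reidemeister--Schreier. The difference lies in the certificate. The paper first proves a Claim: it enumerates epimorphisms $g\colon\pi\to G$ onto finite groups together with subgroups $G_0\le G$, computes a generating set for $g^{-1}(G_0)$ by Reidemeister--Schreier, and certifies $g^{-1}(G_0)=\pi_0$ by the two-sided inclusion check of Lemma \ref{lem:samesubgroup}; coset representatives for $\Gamma_0$ in $\Gamma$ are then pulled back from the finite group through $g\circ f$. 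You instead enumerate candidate coset tables and transversals for $\pi_0$ directly and certify them by finitely many conditions: the $\sigma_b$ kill the relators of $S$, the action is transitive, the given generators of $\pi_0$ fix the basepoint, and $t_i b\,\overline{t_ib}^{\,-1}\in\pi_0$ for all $i,b$; the counting argument (these memberships give $\pi=\bigcup_i\pi_0t_i$, hence $[\pi:\pi_0]\le n$, while $\pi_0\le\mathrm{Stab}(1)$ and transitivity give $[\pi:\pi_0]\ge n$) then forces $\pi_0=\mathrm{Stab}(1)$ and distinctness of the cosets without ever deciding a non-membership. The two certificates carry equivalent information (your permutation representation is exactly a finite quotient with $G_0$ the point stabilizer), so neither route is stronger, but yours is self-contained where the paper invokes Lemma \ref{lem:samesubgroup}, and it produces the coset action explicitly, which makes the pull-back along $f$ and the Schreier transversal in $F(A)$ immediate. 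Two cosmetic points: in your first paragraph you list ``the $t_i$ are pairwise non-equivalent'' among the conditions to be verified and suggest it is decidable via membership, which as stated it is not --- but your ``main obstacle'' paragraph correctly replaces this by the counting certificate, so nothing is lost; and the parenthetical proposing $f(u_j)$ as coset representatives is misplaced, since the representatives for $\Gamma_0$ in $\Gamma$ are the words $u_j\in F(A)$ themselves, as your main clause already says.
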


\begin{proof}

We start out with the following claim.\\

\noindent {\it{Claim.}}
There exists an algorithm which takes as input a finite presentation 
 $\pi=\ll B\mid S\rr$ and a finite set $X$ of elements in $\pi$
such that $\pi_0=\ll X\rr$ is  a finite-index subgroup of $\pi$,
and which  finds a homomorphism $f\colon \pi\to G$ to a finite group $G$
and a subgroup $G_0\subset G$ such that $\pi_0=f^{-1}(G_0)$. \\

First note that there exists indeed a homomorphism $f\colon \pi\to G$ onto a finite group and a subgroup $G_0$ of $G$
such that $\pi_0=f^{-1}(G_0)$. 
For example, we could take $G$ to be the quotient of $\pi$ by the core of $\ll X\rr$ and $G_0$ the image of $\pi_0$ in this quotient.

The algorithm now goes through all epimorphisms from $\pi$ to finite groups $G$.
For each epimorphism $f\colon \pi \to G$ onto a finite group we also consider all finite-index subgroups $G_0\subset G$. We then calculate a set  of coset representatives for $G_0\subset G$.
By enumerating the elements in $\pi_0$ and determining their images under $f$
we can then  find preimages of the coset representatives, i.e.\ we can find a set of coset representatives for $f^{-1}(G_0)$. Using the Reidemeister-Schreier process we can then determine
a finite set of generators for $f^{-1}(G_0)$.
If $f^{-1}(G_0)$ and $\pi_0$ generate the same group, then this will be certified by the algorithm
of Lemma \ref{lem:samesubgroup}. By the above discussion this algorithm will terminate after finitely many steps.
This concludes the proof of the claim.

Now suppose we are given two finite presentations  $\Gamma=\ll A\mid  R\rr$ and $\pi=\ll B\mid  S\rr$, a homomorphism  $f\colon \Gamma\to \pi$ and
a  set of elements in $\pi$ which generate a finite-index subgroup $\pi_0\subset \pi$.
We apply the above claim to $\pi_0\subset \pi$.  
We  write  $H:=(g\circ f)(\Gamma)$. Note that  $H_0:=(g\circ f)(\Gamma_0)=(g\circ f)(\Gamma)\cap f(\pi_0)$.  
We can evidently find coset representatives for $H_0\subset H$. 
As in the proof of the claim we can furthermore find preimages of the coset representatives under the map $g\circ f$, which are then coset representative for $\Gamma_0\subset \Gamma$.
Using the Reidemeister--Schreier process we can now find 
a finite presentation $\ll A_0\mid R_0\rr$ together with an isomorphism $\ll A_0\mid R_0\rr\to \Gamma_0$. 
\end{proof}

\begin{lemma}\label{lem:allfiniteindexsubgroups}
There exists an algorithm which, given a finite presentation $\pi=\ll A\mid R\rr$, 
determines a list of all finite-index subgroups; more precisely, it provides
a list of finite presentations $\pi_i=\ll A_i\mid R_i\rr$ and monomorphisms $f_i\colon \pi_i\to \pi$
such that any finite-index subgroup  of $\pi$ agrees with $f_i(\pi_i)$ for some $i$.
\end{lemma}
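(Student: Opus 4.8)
The plan is to exploit the standard bijection between subgroups of $\pi$ of index $n$ and transitive actions of $\pi$ on a pointed set with $n$ elements. If $H\leq\pi$ has index $n$, then left multiplication on the coset space $\pi/H$, together with a bijection $\pi/H\to\{1,\dots,n\}$ sending $H$ to $1$, produces a homomorphism $\phi\co\pi\to S_n$ whose image is transitive and satisfies $H=\phi^{-1}\big(\op{Stab}_{S_n}(1)\big)$; conversely, for \emph{any} homomorphism $\phi\co\pi\to S_n$ with transitive image the subgroup $H_\phi:=\phi^{-1}\big(\op{Stab}_{S_n}(1)\big)$ has index $n$ in $\pi$. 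So every finite-index subgroup of $\pi$ is of the form $H_\phi$ for some such $\phi$, and it suffices to enumerate these $\phi$ and, for each, output a finite presentation of $H_\phi$ together with the inclusion.

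The enumeration is straightforward. For each $n=1,2,3,\dots$ one lists the finitely many functions $A\to S_n$; each extends uniquely to a homomorphism $F(A)\to S_n$, and since $S_n$ is finite one can decide whether every relator in $R$ maps to the identity, i.e.\ whether the function descends to a homomorphism $\phi\co\pi\to S_n$. Discarding those whose image is not transitive on $\{1,\dots,n\}$ (a finite check) leaves the relevant $\phi$. For each of these the action gives a coset table for $H_\phi$ directly: choosing for each $j\in\{1,\dots,n\}$ a word $t_j$ in $A$ with $\phi(t_j)(1)=j$ (and $t_1$ empty) yields a transversal $\{t_1,\dots,t_n\}$ for $H_\phi$ in $\pi$, since the orbit map $gH_\phi\mapsto \phi(g)(1)$ is a bijection.

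Feeding this transversal into the Reidemeister--Schreier rewriting process, which is effective, produces a finite generating set for $H_\phi$ (given by explicit words in $A$) and a finite set of defining relators among them, hence a finite presentation $\ll A_\phi\mid R_\phi\rr$ and a monomorphism $f_\phi\co \ll A_\phi\mid R_\phi\rr\to\pi$ with image $H_\phi$. Running over all $n$ and all admissible $\phi$ and listing the pairs $(\ll A_\phi\mid R_\phi\rr, f_\phi)$ gives the desired (in general infinite, and possibly repetitive) list; by the correspondence above, every finite-index subgroup of $\pi$ occurs as $f_\phi(\ll A_\phi\mid R_\phi\rr)$ for some term of the list.

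I do not anticipate a real obstacle. The two points that must be handled with some care are: recording the correspondence with \emph{pointed} transitive actions, so that one recovers every finite-index subgroup exactly and not merely one representative per conjugacy class; and the bookkeeping in Reidemeister--Schreier needed to present $f_\phi$ as a genuine map of presentations (a map $A_\phi\to F(A)$ descending to the inclusion $H_\phi\into\pi$) rather than as an abstract isomorphism onto $H_\phi$. Both are routine, and notably the argument uses only that $R$ and $S_n$ are finite, so it applies to an arbitrary finite presentation without any appeal to solvability of the word problem in $\pi$.
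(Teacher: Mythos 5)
Your proposal is correct and follows essentially the same route as the paper: both enumerate homomorphisms from $\pi$ to finite groups, realize every finite-index subgroup as the full preimage of a subgroup of the finite target (you via the coset action into $S_n$ and the stabilizer of a point, the paper via the quotient by the normal core and an arbitrary subgroup $G_0\subset G$), and then apply the Reidemeister--Schreier process to a computed transversal to obtain the finite presentation together with the monomorphism into $\pi$. The specialization to symmetric groups is only a cosmetic difference, and your observation that no solution to the word problem is needed applies equally to the paper's argument.
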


\begin{proof}
Given a finite presentation $\pi=\ll A\mid R\rr$ we can list all homomorphisms to finite groups.
Furthermore, given a homomorphism $\pi\to G$ to a finite group and a subgroup $G_0\subset G$
we saw in the previous proof that we can determine coset representatives for $f^{-1}(G_0)\subset \pi$,
and the Reidemeister--Schreier procedure gives a finite presentation for $f^{-1}(G_0)$ together with a map to $\pi$. As we saw in the proof of the previous lemma, any finite index subgroup of $\pi$ arises that way.
\end{proof}

\begin{lemma}\label{lem:normal}
There exists an algorithm which given a  finite presentation $\Gamma=\ll A\mid R\rr$
and a finite set of elements $X\subset F(A)$
certifies that  the subgroup $\langle X\rangle\subset \Gamma$ is normal,
if this is the case.
\end{lemma}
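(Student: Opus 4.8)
The plan is to reduce the assertion ``$\langle X\rangle$ is normal'' to a finite list of membership statements, each of which becomes recursively verifiable once it is true. Write $p\co F(A)\to\Gamma$ for the quotient map. Then the preimage $p^{-1}(\langle X\rangle)$ is precisely the subgroup $\ll X,\ll\ll R\rr\rr\,\rr$ of $F(A)$, so a word $w\in F(A)$ represents an element of $\langle X\rangle$ in $\Gamma$ if and only if $w\in\ll X,\ll\ll R\rr\rr\,\rr$. As already used in the proof of Lemma~\ref{lem:samesubgroup}, this subgroup is recursively enumerable: its elements are exactly the products of elements of $X^{\pm1}$ and of $F(A)$-conjugates of elements of $R^{\pm1}$, and these can be listed systematically.

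Next I would record the elementary group-theoretic fact that $\langle X\rangle$ is normal in $\Gamma$ if and only if $a\,x\,a^{-1}\in\langle X\rangle$ for every $x\in X$ and every $a\in A\cup A^{-1}$. The ``only if'' direction is trivial. For the converse, set $N=\langle X\rangle$; since conjugation by a fixed element is an automorphism and $N=\langle X\rangle$, the hypothesis gives $aNa^{-1}\subseteq N$ for every $a\in A\cup A^{-1}$, and an induction on word length in the generating set $A\cup A^{-1}$ of $\Gamma$ yields $gNg^{-1}\subseteq N$ for all $g\in\Gamma$; applying this to both $g$ and $g^{-1}$ gives $gNg^{-1}=N$. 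It is essential here to conjugate by the \emph{inverses} of the generators as well: for example $\langle a\rangle$ is not normal in $\mathrm{BS}(1,2)=\ll a,t\mid tat^{-1}a^{-2}\rr$, even though $tat^{-1}$, $t^{-1}at^{-1}\cdot$ (well, $aaa^{-1}$) and the conjugates by $a^{\pm1}$ all lie in $\langle a\rangle$; what fails is $t^{-1}at\in\langle a\rangle$.

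Given this, the algorithm is immediate. For each of the finitely many words $a\,x\,a^{-1}$ with $x\in X$ and $a\in A\cup A^{-1}$, run in parallel an enumeration of the elements of $\ll X,\ll\ll R\rr\rr\,\rr\subset F(A)$ and wait for that word to appear. If $\langle X\rangle$ is normal in $\Gamma$, then by the previous paragraph each of these finitely many words lies in $\ll X,\ll\ll R\rr\rr\,\rr$, so every one of the searches terminates after finitely many steps; once all of them have terminated, the algorithm halts and certifies normality.

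I do not expect a genuine obstacle here: the argument is of exactly the same flavour as Lemmas~\ref{lem:resfinite} and~\ref{lem:samesubgroup}. The one point to be careful about is that this is inherently a one-sided procedure --- it halts precisely when $\langle X\rangle$ is normal and cannot in general detect non-normality (nor is it asked to) --- and the only mild subtlety in the correctness proof is remembering to conjugate by the inverses of the generators, as the $\mathrm{BS}(1,2)$ example illustrates.
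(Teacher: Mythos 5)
Your proposal is correct and takes essentially the same route as the paper: reduce normality to finitely many membership statements in the subgroup $\ll X,\ll\ll R\rr\rr\,\rr\subset F(A)$, which is recursively enumerable, and wait for the finitely many required words to appear in the enumeration. The one substantive difference is that you conjugate by the generators \emph{and} their inverses, whereas the paper's own proof states the criterion only with the conjugates $a_ig_ja_i^{-1}$; as your $\mathrm{BS}(1,2)$ example shows, that weaker criterion is not an ``if and only if'' (with $N=\langle a\rangle$ one has $aNa^{-1}=N$ and $tNt^{-1}\subseteq N$ although $N$ is not normal), so your version is the more careful one and is needed for the certificate to be sound, e.g.\ when Lemma~\ref{lem:normal} is invoked in Lemma~\ref{lem:givesdesiredquotient}. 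Apart from this welcome correction, the argument matches the paper's.
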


\begin{proof}
Let $\Gamma=\langle a_1,\dots,a_l\mid r_1,\dots,r_m\rangle$ be a finite presentation and let $X=\{g_1,\dots,g_k\}\subset F(A)$.
Note that $g_1,\dots,g_k$ generate a normal subgroup in $\Gamma$ if and only 
if $a_ig_ja_i^{-1}$ lie in   $\ll X,\,\ll \ll R\rr\rr\,\rr $ for any $i$ and $j$. 

But if this is the case, then this can be certified by enumerating all
elements in $\ll X,\,\ll \ll R\rr\rr\,\rr $ and after finitely many steps we will
have verified that indeed  all the elements $a_ig_ja_i^{-1}$ lie in  $\ll X,\,\ll \ll R\rr\rr\,\rr $.
\end{proof}

\begin{lemma}\label{lem:givesdesiredquotient}
There exists an algorithm which given  two finitely presented groups  $\Gamma=\ll A\mid  R\rr$ and $\pi=\ll B\mid  S\rr$
and   a finite set of elements $X\subset \Gamma$
certifies  that  the subgroup $\langle X\rangle\subset \Gamma$ is normal,
and that the quotient of $\Gamma$ by the normal subgroup $\langle X\rangle$ is isomorphic to $\pi$.
\end{lemma}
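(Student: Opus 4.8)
The plan is to reduce this to the algorithms already established in Lemmas~\ref{lem:normal} and~\ref{lem:findiso}. First I would record the elementary observation that, for \emph{any} finite set $X\subset F(A)$, the finite presentation $\ll A\mid R\cup X\rr$ is a presentation of the quotient $\Gamma/\ll\ll X\rr\rr$ of $\Gamma$ by the normal closure of $X$; this is immediate from the convention that $\ll A\mid R\rr$ denotes $F(A)/\ll\ll R\rr\rr$. In particular, \emph{if} $\langle X\rangle$ is normal in $\Gamma$, then $\ll X\rr=\ll\ll X\rr\rr$, and so $\ll A\mid R\cup X\rr$ is a presentation of $\Gamma/\langle X\rangle$.

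The algorithm would then run two Turing machines in parallel. The first applies the algorithm of Lemma~\ref{lem:normal} to $\Gamma=\ll A\mid R\rr$ and $X$; if $\langle X\rangle$ is normal it halts and certifies this. The second applies the algorithm of Lemma~\ref{lem:findiso} to the pair of presentations $\ll A\mid R\cup X\rr$ and $\pi=\ll B\mid S\rr$; if these present isomorphic groups it halts and outputs an explicit isomorphism. Our algorithm waits until both machines have halted and then returns the combined data: the certificate that $\langle X\rangle$ is normal in $\Gamma$ and the isomorphism, which by the observation above we are entitled to read as an isomorphism $\Gamma/\langle X\rangle\to\pi$.

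For correctness, suppose the hypotheses hold. Then the first machine halts by Lemma~\ref{lem:normal}; and since $\langle X\rangle$ is then normal, $\ll A\mid R\cup X\rr$ presents $\Gamma/\langle X\rangle\cong\pi$, so the second machine halts by Lemma~\ref{lem:findiso}. Hence the algorithm terminates. Conversely, whenever it terminates, the first machine has verified normality of $\langle X\rangle$ — so that $\ll A\mid R\cup X\rr$ genuinely presents $\Gamma/\langle X\rangle$ — and the second has produced an isomorphism from that presentation to $\pi$, so both asserted facts indeed hold.

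I do not foresee a real obstacle; the only subtlety is one of bookkeeping. The identification of $\ll A\mid R\cup X\rr$ with $\Gamma/\langle X\rangle$ (rather than with $\Gamma/\ll\ll X\rr\rr$) is legitimate only once normality is known, so it is important that the two searches be run together and that success be reported only when \emph{both} have succeeded, rather than feeding $\ll A\mid R\cup X\rr$ to the isomorphism finder unconditionally.
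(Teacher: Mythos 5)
Your proposal is correct and matches the paper's own argument: certify normality of $\langle X\rangle$ via Lemma~\ref{lem:normal} and certify an isomorphism from the presentation $\ll A\mid R\cup X\rr$ (which then presents $\Gamma/\langle X\rangle$) to $\pi$ via Lemma~\ref{lem:findiso}. Whether the two certifying searches are run in parallel or one after the other is immaterial, since under the hypotheses both terminate.
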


\begin{proof}
Let $X=\{g_1,\dots,g_k\}$ be a finite set of elements in a finite presentation
$\Gamma=\langle a_1,\dots,a_l\mid r_1,\dots,r_m\rangle$.
We first apply the algorithm from the proof of Lemma \ref{lem:normal}
which allows us to certify that  $\langle g_1,\dots,g_k\rangle\subset \Gamma$ is normal.
If  $\Gamma/\langle g_1,\dots,g_k\rangle$ is isomorphic to $\pi$,
then the group given by the finite presentation
\[ \langle a_1,\dots,a_l\mid r_1,\dots,r_m,g_1,\dots,g_k\rangle\]
 is isomorphic to $\pi$. But by Lemma \ref{lem:findiso} the existence of such an isomorphism can be certified after finitely many steps.
\end{proof}

\subsection{Preliminary observations on the Membership Problem}\label{section:prelims}

In this section we will prove two elementary lemmas dealing with the Membership Problem.

\begin{lemma} \label{lem:mpfiniteindex}
There exists an algorithm which takes as input a finite presentation $\pi=\ll A\mid R\rr$,
a finite set $X\subset \pi$ such that $\ll X\rr$ is a finite-index subgroup of $\pi$
and an element $g\in \pi$ and   decides whether or not $g\in \pi$  lies in $\langle X\rangle$.
\end{lemma}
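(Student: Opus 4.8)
The plan is to reduce the membership question in $\pi$ to a membership (in fact equality-of-cosets) question in a finite group, using the fact that $\langle X\rangle$ has finite index. First I would invoke the Claim proved inside Lemma \ref{lem:finiteindexkernelcokernel}: since $\ll X\rr$ is a finite-index subgroup of $\pi=\ll A\mid R\rr$, there is an algorithm that produces a homomorphism $f\colon\pi\to G$ onto a finite group $G$ and a subgroup $G_0\subset G$ with $\ll X\rr=f^{-1}(G_0)$. Having found such $f$ and $G_0$, the desired decision is immediate: for the input element $g\in\pi$, compute $f(g)\in G$ (this is a finite computation, evaluating the word $g$ under the explicitly given map $A\to G$), and then test whether $f(g)\in G_0$, which is a finite check in the finite group $G$. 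Since $\ll X\rr=f^{-1}(G_0)$, we have $g\in\ll X\rr$ if and only if $f(g)\in G_0$, so the algorithm outputs ``yes'' or ``no'' accordingly.

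The one subtlety is that the Claim only guarantees that the search for $f$ and $G_0$ \emph{terminates} (it certifies, via Lemma \ref{lem:samesubgroup}, that $f^{-1}(G_0)$ and $\ll X\rr$ generate the same subgroup); to run that search we also need to compute the Reidemeister--Schreier generators of $f^{-1}(G_0)$ for each candidate $f$ and $G_0$, exactly as in the proof of Lemma \ref{lem:finiteindexkernelcokernel}. This is all already established there, so the body of the proof is essentially a two-line appeal to that Claim followed by the finite-group computation. The main obstacle — ensuring the search for the pair $(f,G_0)$ halts — has in effect already been overcome in the proof of Lemma \ref{lem:finiteindexkernelcokernel}, so here there is nothing new to prove beyond assembling these pieces.

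Thus the proof reads: apply the Claim from Lemma \ref{lem:finiteindexkernelcokernel} to $\ll X\rr\subset\pi$ to obtain $f\colon\pi\to G$ and $G_0\subset G$ with $f^{-1}(G_0)=\ll X\rr$; then $g\in\ll X\rr$ iff $f(g)\in G_0$, which is decidable since $G$ is finite.
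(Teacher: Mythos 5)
Your proposal is correct and matches the paper's proof: the paper likewise invokes the Claim from the proof of Lemma \ref{lem:finiteindexkernelcokernel} to algorithmically obtain $f\colon\pi\to G$ and $G_0\subset G$ with $\ll X\rr=f^{-1}(G_0)$, and then decides membership by the finite check of whether $f(g)\in G_0$. Your discussion of termination of the search for $(f,G_0)$ is exactly the content already established there, so nothing is missing.
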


\begin{proof}
By the proof of Lemma \ref{lem:finiteindexkernelcokernel} we can find
a homomorphism $f\colon \pi\to G$ to a finite group and a  subgroup $G_0\subset G$ such that $\pi_0=f^{-1}(G_0)$.
Given an element $g\in \pi$ we now only have to determine whether or not $f(g)$ lies in $G_0$, which can be done trivially.
\end{proof}

Before we state our next lemma we need to give two more definitions:
\bn
\item We say that a group $\pi$ is  \emph{virtually isomorphic to a retract} of a group $\Gamma$ if there exists a finite-index subgroup $\pi_0$ of $\pi$  which is isomorphic to a retract of  $\Gamma$.
\item  A class $\CC$ of groups is \emph{recursively enumerable} if there exists a Turing machine that outputs a list of finite presentations such that any group in $\CC$ is isomorphic to the group defined by one of those finite presentations.
\en
We can now formulate the following lemma.

\begin{lemma}\label{lem: MP passing to fi subgroups}\label{lem:mpvirtualretract}
Let $\mathcal{C},\mathcal{D}$ be classes of finitely presentable groups, and suppose that every group in $\mathcal{C}$ is virtually isomorphic to a retract of a group in $\mathcal{D}$.
  If
\begin{enumerate}
\item the Membership Problem is  solvable in $\mathcal{D}$, and 
\item  $\mathcal{D}$ is recursively enumerable,
\end{enumerate}
then the Membership Problem is also  solvable in $\mathcal{C}$.
\end{lemma}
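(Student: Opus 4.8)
The plan is to reduce the Membership Problem in $\CC$ to the Membership Problem in $\DD$ by two successive reductions: first passing from a subgroup of $\pi$ to a subgroup of a finite-index subgroup $\pi_0 \leq \pi$, and then transporting the question along a retraction $\pi_0 \into \Gamma$ for some $\Gamma \in \DD$. The input is a finite presentation $\ll A \mid R\rr$ (which we assume presents some $\pi \in \CC$), words $w_1,\dots,w_k$ generating a subgroup $H \leq \pi$, and a word $z$; we must decide whether $z \in H$.

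First I would locate a suitable finite-index subgroup together with a retract structure. Using Lemma \ref{lem:allfiniteindexsubgroups} we enumerate all finite-index subgroups $\pi_i \leq \pi$ (as presentations $\ll A_i \mid R_i\rr$ with monomorphisms $f_i \colon \pi_i \to \pi$); simultaneously, since $\DD$ is recursively enumerable by hypothesis (2), we enumerate finite presentations $\ll B_j \mid S_j\rr$ of all groups in $\DD$; and simultaneously we run the algorithm of Lemma \ref{lem:findretract} on each pair $(\ll A_i\mid R_i\rr, \ll B_j\mid S_j\rr)$. Because every group in $\CC$ is virtually isomorphic to a retract of a group in $\DD$, there is some $\pi_0 := f_{i_0}(\pi_{i_0})$ of finite index in $\pi$ and some $\Gamma := \ll B_{j_0}\mid S_{j_0}\rr \in \DD$ with an embedding $\iota \colon \pi_{i_0} \into \Gamma$ admitting a left inverse $\rho \colon \Gamma \to \pi_{i_0}$; this triple will be found after finitely many steps. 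Fix it.

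Next I would reduce the membership question for $H \leq \pi$ to one inside $\pi_0$. Apply Lemma \ref{lem:finiteindexkernelcokernel} twice. First, with $\Gamma$ there taken to be $\pi$ itself, $\pi$ there taken to be $\pi$, the homomorphism the identity, and the finite-index subgroup $\pi_0$: this yields a finite set of coset representatives $g_1,\dots,g_m$ for $\pi_0$ in $\pi$, and a presentation of $\pi_0$. Also, $H \cap \pi_0$ has finite index in $H$; applying Lemma \ref{lem:finiteindexkernelcokernel} again — with the inclusion $\ll w_1,\dots,w_k\rr \to \pi$ and the finite-index subgroup $\pi_0$ — we obtain a finite generating set for $H_0 := H \cap \pi_0$ (expressed as words in $A$, hence pullable back to $\pi_{i_0}$ via $f_{i_0}^{-1}$, which is computable since $f_{i_0}$ is an explicit isomorphism onto $\pi_0$), together with coset representatives $h_1,\dots,h_n$ for $H_0$ in $H$, which we may take to lie among the words $w_{i_1}\cdots w_{i_r}$. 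Then $z \in H$ if and only if there is some $t \in \{1,\dots,n\}$ with $h_t^{-1} z \in H_0$; and $h_t^{-1} z \in H_0 \subseteq \pi_0$ forces $h_t^{-1}z$ to lie in $\pi_0$, which (using the first application of Lemma \ref{lem:finiteindexkernelcokernel}, i.e.\ the finite quotient detecting $\pi_0$) is checkable, and if it holds we obtain $h_t^{-1} z$ as an element of $\pi_0 \cong \pi_{i_0}$ explicitly. So the whole question becomes: for each of finitely many explicitly given elements $y_t \in \pi_{i_0}$, decide whether $y_t \in H_0 \leq \pi_{i_0}$.

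Finally I would push this last problem into $\DD$ via the retraction. For $y \in \pi_{i_0}$ and $H_0 = \ll u_1,\dots,u_p\rr \leq \pi_{i_0}$, I claim $y \in H_0$ if and only if $\iota(y) \in \ll \iota(u_1),\dots,\iota(u_p)\rr \leq \Gamma$: one direction is trivial, and conversely if $\iota(y)$ is a word in the $\iota(u_s)$ then applying $\rho$ and using $\rho \circ \iota = \id_{\pi_{i_0}}$ gives $y$ as the same word in the $u_s$. Since $\Gamma \in \DD$ and the Membership Problem is solvable in $\DD$ by hypothesis (1), and since we have explicit words in $B_{j_0}$ representing $\iota(y)$ and the $\iota(u_s)$ (compute $\iota$ on generators of $\ll A_{i_0}\rr$ and substitute), we can decide each of these membership questions. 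Combining over $t$ gives the decision for $z \in H$.

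The main obstacle is bookkeeping rather than a single hard step: one must be careful that Lemma \ref{lem:finiteindexkernelcokernel} really does hand back the generators of $H_0$ and the coset representatives \emph{as concrete words in the original generators} (so that they can be transported along the maps $f_{i_0}^{-1}$ and $\iota$), and that the simultaneous enumeration in the first paragraph genuinely terminates — which it does, precisely because of the two hypotheses (recursive enumerability of $\DD$ supplies $\Gamma$, and Lemma \ref{lem:findretract} certifies the retract structure). A secondary subtlety is that $\iota$ need not be surjective, so one cannot pull the membership question back the other way; it is essential that membership in $\Gamma$ be tested only for elements actually in the image of $\iota$, where the left inverse $\rho$ is available.
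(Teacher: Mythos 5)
Your proposal is correct and follows essentially the same route as the paper: enumerate finite-index subgroups of $\pi$ together with groups in $\mathcal{D}$ until Lemma \ref{lem:findretract} certifies a retract structure, use Lemma \ref{lem:finiteindexkernelcokernel} to get generators of $H_0=H\cap\pi_0$ and coset representatives, test $h_t^{-1}z\in\pi_0$ via Lemma \ref{lem:mpfiniteindex}, and then decide membership in $H_0$ by transporting the question into $\Gamma\in\mathcal{D}$ along the (injective) map admitting the left inverse. The only cosmetic difference is that the paper justifies the final transfer simply by injectivity of $f$, whereas you argue via the retraction $\rho$; both are valid.
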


\begin{proof}
Let $\pi=\ll A\mid R\rr$ be a finite presentation of a group in $\mathcal{C}$.    Using Lemma \ref{lem:allfiniteindexsubgroups} we may enumerate all subgroups of finite index in $\pi$.  Because $\mathcal{D}$ is recursively enumerable, we will 
by Lemma \ref{lem:findretract} eventually find a finite-index subgroup $\pi_0$ of $\pi$,
a finite  presentation $\ll B\mid S\rr$ for a group $\Gamma$ $\mathcal{D}$, a homomorphism $f\colon \pi_0\to \Gamma$
and a left-inverse $g\colon \Gamma\to \pi_0$ to $f$. Note that $f$ is in particular injective. 

Let $H$ be a finitely generated subgroup of $\pi=\ll A\mid R\rr$, specified by a finite set of elements. By Lemma \ref{lem:finiteindexkernelcokernel}, we may compute a generating set for $H_0= H\cap\pi_0$ and a set of coset representatives $h_1,\ldots, h_k$ for $H_0$ in $H$.  Now, if $g\in \pi$ then, for each $i$, we may by Lemma \ref{lem:mpfiniteindex} determine whether or not $h^{-1}_ig\in\pi_0$.  If there is no such $i$ then evidently $g\notin H$.  Otherwise, if $h^{-1}_ig\in\pi_0$ then, using the solution to the membership problem in $f(\pi_0)\subset \Gamma$, we may determine whether or not $h^{-1}_ig\in H_0$. 
The lemma now follows from the observation that  $g\in H$ if and only if $h^{-1}_ig\in H_0$ for some $i$.
\end{proof}

\section{The Membership Problem for 3-manifold groups}\label{section:proofmp}

\subsection{The theorem of Kapovich--Weidmann--Miasnikov} 
\label{section:kmw}

We first recall that a graph of groups $\GG$ is a finite graph 
with vertex set $V=V(\GG)$ and edge set $E=E(\GG)$  together 
with vertex groups $G_v, v\in V$ and edge groups $H_e,e\in E$ and monomorphisms $i_e\colon H_e\to G_{i(e)}$ and $t_e\colon H_e\to G_{t(e)}$.
Also recall that Serre \cite{Ser80}
associated to a graph $\GG$ of finitely presented groups a finitely presented
 group, which is referred to as the \emph{fundamental group of $\GG$}. 

We also need the following definitions:
\bn
\item 
A \emph{decorated group} is a pair $(G,\{H_i\}_{i\in I})$ where $G$ is a group and $\{H_i\}_{i\in I}$ is a finite set of subgroups of $G$.
We refer to $G$ as the  \emph{vertex group} and to each $H_i, i\in I$ as an \emph{edge group}.
\item We say that two decorated groups  $(G,\{H_i\}_{i\in I})$ and
 $(K,\{L_j\}_{j\in J})$ are \emph{isomorphic}, written as 
 \[ (G,\{H_i\}_{i\in I})\cong  (K,\{L_j\}_{j\in J}), \]
  if there exists an isomorphism $\varphi\colon G\to H$, a bijection $\psi\colon I\to J$ and elements $k_j,j\in J$ such that $\varphi(H_i)=g_{\psi(i)}L_{\psi(i)}g_{\psi(i)}^{-1}$ for any $i\in I$.
\item A \emph{subdecoration} of a decorated group  $(G,\{H_i\}_{i\in I})$ is a pair  $(G,\{H_j\}_{j\in J})$,
where $J\subset I$ is a subset.
\item A \emph{finite decorated group presentation}  is a pair
\[ (\ll A\mid R\rr,\{\ll X_i\mid S_i\rr, f_i\}_{i\in I})\]
where $\ll A\mid R\rr$ is a finite presentation, where $I$ is a finite set,
and  each  $\ll X_i\mid S_i\rr$ is a finite presentation
and each $f_i\colon \ll X_i\mid S_i\rr\to \ll A\mid R\rr$ is a monomorphism.
Note that a finite decorated group presentation  $(\ll A\mid R\rr,\{\ll X_i\mid S_i\rr, f_i\}_{i\in I})$ defines 
a decorated group $(\ll A\mid R\rr,\{f_i(\ll X_i\mid S_i\rr)\}_{i\in I})$.
\item A \emph{finite presentation for a decorated group $(G,\{H_i\}_{i\in I})$} is a finite decorated group presentation $(\ll A\mid R\rr,\{\ll X_i\mid S_i\rr, f_i\}_{i\in I})$ such that 
\[ (G,\{H_i\}_{i\in I})\cong (\ll A\mid R\rr,\{f_i(\ll X_i\mid S_i\rr)\}_{i\in I}). \]
We say that a decorated group  is \emph{finitely presentable} if it admits a finite decorated group presentation.
\en

We say that a class $\PP$ of decorated groups is \emph{recursively enumerable} if there exists a Turing machine that outputs a list of finite decorated group  presentations such that any decorated group  in $\PP$ is isomorphic to a decorated group defined by one of those finite decorated group  presentations.

Finally, given a class $\PP$ of decorated groups we say that a graph of groups with vertex groups $\{G_v\}_{v\in V}$ and edge groups $\{H_e\}_{e\in E}$ 
is \emph{based on $\PP$} if given any vertex $v$ the 
pair $(G_v,\{ i_e(H_e)\}_{e\in E, i(e)=v})$ is equivalent  to a subdecoration 
 of a  decorated group  in $\PP$. 

The following theorem is basically due to Kapovich--Weidmann--Miasnikov.

\begin{theorem}\textbf{\emph{(Kapovich--Weidmann--Miasnikov)}}\label{thm: KWM}\label{thm:kmw05}
\label{thm:kmw}
Let $\PP$ be a class of decorated groups such that the following hold:
\bn
\item[(I)] There is an algorithm which for each  finite decorated group presentation 
$(\ll A\mid R\rr,\{\ll X_i\mid S_i\rr, f_i\}_{i\in I})$ of a  decorated group in $\PP$, each $i\in I$, and each finite set $Y\subset F(A)$
determines whether or not a given element in $\ll A\mid R\rr$ lies in the double coset $\langle f_i(X_i)\rangle \langle Y\rangle\subset \ll A\mid R\rr$.
\item[(II)] The Membership Problem is solvable for the class of all vertex groups appearing in $\PP$.
\item[(III)] Every edge group is \emph{slender},  meaning that every subgroup of an edge group is finitely generated.
\item[(IV)] The Membership Problem is solvable for the class of all edge groups appearing in $\PP$.
\item[(V)] There is an algorithm which for 
 each finite decorated group presentation 
$(\ll A\mid R\rr,\{\ll X_i\mid S_i\rr, f_i\}_{i\in I})$  of a decorated group in $\PP$, each $i\in I$ and each finite set $Y\subset F(A)$ 
  computes a finite generating set for the intersection $\ll f_i(X_i)\rr \cap \ll Y\rr\subset \ll A\mid R\rr$. 
\item[(VI)] The class $\PP$ is recursively enumerable.
\en
Then the Membership Problem is solvable for the class of fundamental groups of all
 graphs of groups based on $\PP$.
\end{theorem}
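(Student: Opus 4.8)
The plan is to reduce the problem to the $\GG$-graph folding machinery of \cite{KWM05}, preceded by a step that recovers a graph-of-groups structure from the bare presentation. Since the input to the Membership Problem is only a finite presentation $\ll A\mid R\rr$, I would first produce an explicit graph-of-groups decomposition. Using that $\PP$ is recursively enumerable (VI), enumerate in parallel all finite graphs, all finite decorated group presentations output for $\PP$, and all ways of assembling a graph of groups $\GG$ by decorating the vertices of such a graph with subdecorations of those decorated groups (allowing the edge subgroups to be conjugated) and choosing compatible edge-monomorphisms. By Serre's construction \cite{Ser80} each such $\GG$ comes with an explicit finite presentation of $\pi_1(\GG)$; running Lemma \ref{lem:findiso} in parallel against $\ll A\mid R\rr$ over all candidates, we eventually obtain a graph of groups $\GG$ based on $\PP$ and an explicit isomorphism $\varphi\colon\pi_1(\GG)\to\ll A\mid R\rr$, since by hypothesis the input does present such a fundamental group. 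Pulling $w_1,\dots,w_k$ and $z$ back through $\varphi$, it suffices to decide, inside $\pi_1(\GG)$, whether a given element lies in the finitely generated subgroup $H$ they generate.

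I would then run the folding algorithm of \cite{KWM05}. From a generating set of $H$ one builds an initial finite $\GG$-graph --- a finite graph whose vertices and edges carry vertex- and edge-group data together with labels in the ambient vertex groups --- approximating the minimal $H$-invariant subtree of the Bass--Serre tree of $\GG$, and then repeatedly applies the elementary folding moves. Each move is effective precisely because of the hypotheses: one solves the Membership Problem in vertex groups (II) and edge groups (IV) to detect coincidences among labels; decides whether two incident edges may be folded together, which is a double-coset membership question $\langle f_i(X_i)\rangle\langle Y\rangle$ in a vertex group, i.e.\ (I); and, whenever a fold is performed, computes a finite generating set for the resulting edge group, which arises as an intersection $\langle f_i(X_i)\rangle\cap\langle Y\rangle$, i.e.\ (V). The slenderness hypothesis (III) ensures that every subgroup of an edge group produced along the way is finitely generated --- hence representable by finite data --- and, crucially, forces the folding sequence to terminate.

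By the main results of \cite{KWM05}, under exactly these conditions the folding sequence terminates in a folded (reduced) $\GG$-graph $\mathbf{B}$, which is a finite graph of groups with $\pi_1(\mathbf{B})\cong H$ together with an explicit embedding $\pi_1(\mathbf{B})\hookrightarrow\pi_1(\GG)$; being folded, $\mathbf{B}$ is a faithful and readable model of the $H$-cover of the Bass--Serre tree. Membership of a given $g\in\pi_1(\GG)$ in $H$ is then decided by writing $g$ as an edge-path in $\GG$ and attempting to lift it to a closed path in $\mathbf{B}$ based at the distinguished vertex, using (I), (II), (IV), (V) at each step to test whether the next syllable can be pushed along an edge of $\mathbf{B}$ and tracking the vertex-group syllables with the membership algorithm (II); then $g\in H$ if and only if some such lift closes up. This solves the Membership Problem for $H\le\pi_1(\GG)\cong\ll A\mid R\rr$, hence for the whole class of fundamental groups of graphs of groups based on $\PP$.

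The substantive content, and what I expect to be the hardest part, is the termination of the folding process and the verification that (I)--(V) are exactly what make each elementary move effective --- here the slenderness of edge groups (III) does the real work, both bounding complexity so that folds cannot be applied forever and keeping the intersections in (V) finitely generated. A secondary, bookkeeping-level difficulty is reconciling the abstract decorated groups of $\PP$ with the actual vertex-decorations of $\GG$, which may differ by a conjugation, so that (I) and (V), stated for the subgroups $f_i(X_i)$, remain usable for their conjugates. Since the folding theory, its termination, and the membership readout are all carried out in \cite{KWM05}, the argument ultimately amounts to quoting their results with (I)--(VI) standing in for their hypotheses, together with the structure-recovery step above.
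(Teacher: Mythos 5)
Your proposal follows essentially the same route as the paper: first recover an explicit graph-of-groups decomposition based on $\PP$ by recursively enumerating candidates via (VI) and matching against the input with Lemma \ref{lem:findiso}, and then invoke the machinery of \cite{KWM05}, with conditions (I), (II), (IV), (V) making the procedure effective and (III) guaranteeing the setup applies --- the paper packages this second step simply as verifying that any graph of groups based on $\PP$ is ``benign'' in the sense of \cite[Definition~5.6]{KWM05} and citing \cite[Theorem~5.13]{KWM05}. The only wrinkle you gloss over is that \cite{KWM05} requires not just the yes/no double-coset test of (I) but also an algorithm producing a witness element of $a\langle f_i(X_i)\rangle\cap\langle Y\rangle$ when this intersection is nonempty; the paper bridges this by a one-line enumeration argument, so this is harmless.
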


\begin{proof}
In this proof we assume some familiarity with \cite{KWM05}.  We start out with the following claim.

\begin{claim}
If  a class of decorated groups $\PP$ satisfies Condition I, then it also satisfies
\bn
\item[(I')] There is an algorithm which for each  finite decorated group presentation 
$(\ll A\mid R\rr,\{\ll X_i\mid S_i\rr, f_i\}_{i\in I})$ of a  decorated group in $\PP$, each $i\in I$, and each finite set $Y\subset F(A)$
determines whether or not for a given element $a$ we have  $I:=a\langle f_i(X_i)\rangle\cap  \langle Y\rangle\ne \emptyset$, and if $I\ne \emptyset$, gives as output an element in $I$.
\en
\end{claim}

First note that if $G$ and $H$ are subgroups of a group $\pi$ and if $b\in \pi$, then it follows trivially
that $b\in G\cdot H:=\{ gh\,|\, g\in G,h\in H\}$ if and only if $bG\cap H\ne \emptyset$.
So if $\PP$ satisfies Condition I, then we can determine whether $I:=a\langle f_i(X_i)\rangle\cap  \langle Y\rangle\ne \emptyset$. Now suppose that $I\ne \emptyset$. We now have to find an element in $I$. This means that we have to find an element in
\[ \langle af_i(X_i),\ll \ll aR\rr\rr\,\rangle\cap  \langle Y,\ll \ll R\rr\rr\,\rangle \subset F(A).\]
But we can just enumerate all elements in $\langle af_i(X_i),\ll \ll aR\rr\rr\,\rangle$ and we can enumerate all elements in $\langle Y,\ll \ll R\rr\rr\,\rangle$, and since the intersection is non-trivial we will eventually find an element which lies in both sets.
This concludes the proof of the claim.

It now follows from Conditions I', II, IV and V that any graph of finitely presented groups based on $\PP$ is `benign' in the sense of \cite[Definition~5.6]{KWM05}.
The solvability of the Membership Problem  follows from  \cite[Theorem 5.13]{KWM05} combined with the following claim.

\begin{claim}
There exists an algorithm which takes as input  a finite presentation  $\ll A\mid R\rr$
for the fundamental group of a graph of groups in $\CC$ and which gives as output a finite presentation
of a graph of groups based on $\PP$ together with an isomorphism from $\ll A\mid R\rr$ to the  fundamental group of the graph of groups.
\end{claim}

Let $\ll A\mid R\rr$ be a finite presentation for the fundamental group of a graph of groups in $\CC$.
By assumption the class $\PP$ of decorated groups is recursively enumerable.
It is clear that one can then also recursively enumerate the class of all decorated groups which are subdecorations  of decorated groups in $\PP$. 
It furthermore follows from Lemma \ref{lem:findiso} that  the class of isomorphisms between edge groups in $\PP$ is also recursively enumerable.
 It is now straightforward to see that  the class of  graphs of groups based on $\PP$ is also recursively enumerable. 
By Lemma \ref{lem:findiso} we will eventually find an isomorphism from $\ll A\mid R\rr$
to the finite presentation of the fundamental group of a  graph of groups based on $\PP$.

\end{proof}

 We then have the following immediate corollary
to Theorem \ref{thm:kmw}. See also  \cite[Corollary 5.16]{KWM05}.

\begin{corollary}\label{cor:solvablempproblemandfreeproducts}
Let $\CC$ be a recursively enumerable class of groups 
for which the Membership Problem is solvable.
Then the Membership Problem is solvable for the class of groups which are isomorphic to  finite free products of groups in $\CC$.
\end{corollary}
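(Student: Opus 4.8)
The plan is to deduce this directly from Theorem~\ref{thm:kmw} by realising a finite free product as the fundamental group of a suitable graph of groups. First I would set up the class $\PP$ of decorated groups: take $\PP$ to consist of all decorated groups of the form $(G,\emptyset)$ with $G\in\CC$, i.e. vertex groups from $\CC$ carrying no edge groups at all. Since a finite free product $G_1*\dots*G_n$ of groups in $\CC$ is the fundamental group of the graph of groups whose underlying graph is a tree (say a star) with vertex groups $G_1,\dots,G_n$, one extra vertex carrying the trivial group, and all edge groups trivial, this graph of groups is based on $\PP$ in the sense defined above (each vertex decoration $(G_v,\{i_e(H_e)\})$ has all edge groups trivial, hence is a subdecoration of $(G_v,\emptyset)\in\PP$, after adjoining the trivial group to $\CC$, which changes nothing). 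So it suffices to check that $\PP$ verifies Conditions (I)--(VI) of Theorem~\ref{thm:kmw}.

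Next I would verify the six conditions, which are all either trivial or immediate from the hypotheses. Conditions (II) and (IV): the vertex groups appearing in $\PP$ are exactly the groups of $\CC$ (together with the trivial group), so the Membership Problem is solvable for them by hypothesis. Condition (III): every edge group is trivial, hence slender. Condition (VI): $\CC$ is recursively enumerable by hypothesis, and from a recursive enumeration of finite presentations of groups in $\CC$ one obtains a recursive enumeration of the finite decorated group presentations $(\ll A\mid R\rr,\emptyset)$, so $\PP$ is recursively enumerable. Conditions (I) and (V) concern the edge subgroups $\ll f_i(X_i)\rr$, but since there are no edge subgroups the index set $I$ is empty and both conditions are vacuous. (Alternatively, one could allow the trivial edge group explicitly, in which case Condition~(I) asks whether an element lies in $\{e\}\cdot\ll Y\rr=\ll Y\rr$, which is the Membership Problem for $\ll Y\rr$ in a group from $\CC$, solvable by hypothesis; and Condition~(V) asks for generators of $\{e\}\cap\ll Y\rr=\{e\}$, trivially the empty set. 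Either formulation works.)

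Having checked all conditions, Theorem~\ref{thm:kmw} yields that the Membership Problem is solvable for the class of fundamental groups of all graphs of groups based on $\PP$; since, as noted, every finite free product of groups in $\CC$ is such a fundamental group, the corollary follows. I do not expect a genuine obstacle here: the only point requiring a moment's care is the bookkeeping around whether one prefers to model free products with literally no edge groups (making (I) and (V) vacuous) or with trivial edge groups (making them trivially true), and one should also record that closing $\CC$ under adding the trivial group — or simply using a graph of groups with no auxiliary vertex, e.g. a path, when $n\ge 2$ — keeps the class recursively enumerable and with solvable Membership Problem. This is precisely the content of \cite[Corollary~5.16]{KWM05}, so the argument is essentially a translation into the decorated-group language set up above.
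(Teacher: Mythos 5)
Your argument is essentially the paper's: the paper also defines $\PP$ to consist of the decorated groups $(G,\{e\})$ with $G\in\CC$, checks Conditions I--VI of Theorem \ref{thm:kmw} (which reduce to the hypotheses on $\CC$ exactly as you say), and realises a finite free product as the fundamental group of a tree of groups with trivial edge groups. One small correction: your primary formulation $(G,\emptyset)$ does not literally work, since a subdecoration can only \emph{omit} edge groups, so a vertex decoration $(G_v,\{1\})$ coming from an incident edge is not a subdecoration of $(G_v,\emptyset)$; your parenthetical alternative with the trivial edge group recorded explicitly is the correct (and the paper's) choice.
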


\begin{proof}
We denote by $\PP$ the class of decorated groups $(G,\{e\})$ with $G\in \CC$. 
It follows easily from our assumptions that Conditions I to VI of Theorem \ref{thm:kmw05} are satisfied for $\PP$.
By Theorem \ref{thm:kmw05} the Membership Problem is solvable for the class of groups which are isomorphic to fundamental groups of graphs of groups based on $\PP$.
The corollary now follows from the observation that if the underlying graph is a tree, then the corresponding
 fundamental group is in this case just the free product of the vertex groups.
\end{proof}

\subsection{The reduction to the case of closed, orientable,  prime 3-manifolds} \label{section:proofmpi}

The goal of this section is to prove the following proposition.

\begin{proposition}\label{prop:2}
If the Membership Problem is solvable for the class of  fundamental groups 
of all closed, orientable,  prime $3$-manifolds, then it is also solvable for the class of fundamental groups of all  $3$-manifolds.
\end{proposition}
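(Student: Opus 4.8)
The goal is to reduce the Membership Problem for arbitrary $3$-manifold groups to the case of closed, orientable, prime $3$-manifolds, using the machinery already assembled: the prime decomposition, the doubling trick to pass from manifolds with boundary to closed ones, the passage to orientable double covers via Lemma~\ref{lem:mpvirtualretract}, and Corollary~\ref{cor:solvablempproblemandfreeproducts} together with Theorem~\ref{thm:kmw} to handle free products. First I would note that $\pi_1$ of a (compact, connected) $3$-manifold $N$ decomposes as a free product $\pi_1(N_1)*\cdots*\pi_1(N_m)*F_r$, where $N_1,\dots,N_m$ are the prime summands and $F_r$ is a free group of rank equal to the number of $S^1\times S^2$ summands (and, for manifolds with boundary, possibly further free factors). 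By Corollary~\ref{cor:solvablempproblemandfreeproducts}, it is enough to solve the Membership Problem for the recursively enumerable class consisting of $\mathbb Z$ together with the fundamental groups of prime $3$-manifolds; the infinite cyclic group is trivial, so the task is to reduce the \emph{prime} case to the \emph{closed, orientable, prime} case.

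Next, I would dispose of the boundary. If $N$ is a prime $3$-manifold with nonempty boundary, its double $DN$ (glue two copies of $N$ along $\partial N$) is a closed $3$-manifold, and $\pi_1(N)$ is a retract of $\pi_1(DN)$: the folding map $DN\to N$ induces a retraction, while the inclusion of one copy is a section. A small subtlety is that $DN$ need not be prime, but by the previous paragraph (applied to $DN$) and Corollary~\ref{cor:solvablempproblemandfreeproducts} this causes no difficulty, since we only need the Membership Problem to be solvable for the prime summands of $DN$, all of which are closed. Then I would invoke Lemma~\ref{lem:mpvirtualretract} with $\mathcal D$ the class of fundamental groups of closed, orientable, prime $3$-manifolds: every closed prime $3$-manifold has an orientable double cover which is again closed and prime, so every closed prime $3$-manifold group is virtually isomorphic to a retract (indeed, virtually equal to) a group in $\mathcal D$; moreover $\mathcal D$ is recursively enumerable (one can enumerate triangulations, test orientability and primeness algorithmically using \cite{JLR02,jaco_efficient_2003}, and read off $\pi_1$). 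Hence solvability of the Membership Problem in $\mathcal D$ implies it for all closed prime $3$-manifold groups, hence for all prime $3$-manifold groups, hence — by the free product step — for all $3$-manifold groups.

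The only genuinely delicate point is the recursive enumerability that feeds into Corollary~\ref{cor:solvablempproblemandfreeproducts} and Lemma~\ref{lem:mpvirtualretract}, and the matching of abstract algebra with topology: given a finite presentation that happens to present a $3$-manifold group, one must be able to \emph{recognize} the free-product structure and exhibit the prime summands as $3$-manifold groups. This is handled by enumerating triangulated $3$-manifolds, using the algorithms of Jaco--Letscher--Rubinstein and Jaco--Rubinstein to compute their prime decompositions, computing the induced presentations of the $\pi_1$ of the prime pieces, and then — since $3$-manifold groups are residually finite and we have Lemma~\ref{lem:findiso} — waiting until the given presentation is certified isomorphic to one so produced. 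I expect this bookkeeping, rather than any new idea, to be the main obstacle: one must be careful that every class invoked ($3$-manifold groups, prime $3$-manifold groups, closed orientable prime $3$-manifold groups) is recursively enumerable in the precise sense demanded by the lemmas, and that the retractions in the doubling argument and the finite-index inclusions in the orientation double cover are produced \emph{algorithmically}, which is exactly what Lemmas~\ref{lem:findretract} and \ref{lem:finiteindexkernelcokernel} are designed to provide.
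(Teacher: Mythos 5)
Your toolkit is the right one (Kneser--Milnor, doubling, orientation covers, Corollary~\ref{cor:solvablempproblemandfreeproducts}, Lemma~\ref{lem:mpvirtualretract}, and the enumeration of Theorem~\ref{theorem: 3-manifolds are re}), but the \emph{order} in which you apply it creates a genuine gap at exactly the point you dismiss as bookkeeping. You first apply the free-product reduction to the original manifold $N$, which may be non-orientable and may have boundary. For Corollary~\ref{cor:solvablempproblemandfreeproducts} this requires the class of \emph{all} prime $3$-manifold groups (bounded and non-orientable ones included) to be recursively enumerable in the sense the corollary demands, i.e.\ one must effectively list presentations of exactly such groups; and later, inside your doubling step, you likewise need the class of fundamental groups of closed but possibly non-orientable prime $3$-manifolds to be recursively enumerable. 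Your proposed justification --- ``enumerate triangulations and compute prime decompositions using \cite{JLR02,jaco_efficient_2003}'' --- does not deliver this: the primeness/decomposition algorithms invoked in the paper (Theorem~\ref{theorem: 3-manifolds are re}, via \cite{jaco_efficient_2003,jaco_algorithms_1995}) are stated for \emph{closed, orientable} manifolds, and no algorithm recognising primeness of bounded or non-orientable triangulated $3$-manifolds is available from the paper's references. The paper's proof is arranged precisely to avoid this: one first passes to the orientable finite cover $M$ of $N$ and then doubles, so that Kneser--Milnor and Corollary~\ref{cor:solvablempproblemandfreeproducts} are only ever applied to \emph{closed orientable} manifolds, where Theorem~\ref{theorem: 3-manifolds are re} suffices; the remaining step is a single application of Lemma~\ref{lem:mpvirtualretract}, since $\pi_1(N)$ is virtually isomorphic to a retract of $\pi_1$ of the closed orientable double $W$.

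Two smaller points. First, your Step~C rests on the unproved claim that the orientation double cover of a closed prime $3$-manifold is again prime; this is true, but it is not formal (one needs the equivariant sphere theorem, or the classification of prime closed $3$-manifolds with non-trivial $\pi_2$), and it is also unnecessary: it is cleaner to take $\mathcal D$ to be the class of fundamental groups of all closed orientable $3$-manifolds, whose recursive enumerability and solvable Membership Problem you have already established via free products of the closed orientable prime case. Second, the parenthetical ``possibly further free factors'' for bounded manifolds is not part of the prime decomposition and should be dropped. The repair for the main issue is simply to reorder your argument into the paper's sequence (orientable cover, then double, then prime decomposition); as written, the hypotheses of Corollary~\ref{cor:solvablempproblemandfreeproducts} and Lemma~\ref{lem:mpvirtualretract} are not verified for the classes you actually feed into them.
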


In the following we say that a class $\MM$ of $3$-manifolds is \emph{recursively enumerable} if there exists a Turing machine that outputs a list of finite simplicial spaces such that any manifold in $\MM$ is  homeomorphic  to one of those simplicial spaces.
In the proof of Proposition \ref{prop:2} we will need the following theorem
which is basically a combination of work of Moise  \cite{Mo52,Mo77}
and 
Jaco--Rubinstein \cite{jaco_efficient_2003} or alternatively Jaco--Tollefson \cite{jaco_algorithms_1995}.

\begin{theorem}\label{theorem: 3-manifolds are re}
The class of closed, orientable, prime 3-manifolds is recursively enumerable.
\end{theorem}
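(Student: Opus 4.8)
The plan is to enumerate all finite simplicial complexes, keep those that are closed, orientable $3$-manifolds, and then among these keep exactly the prime ones; Moise's triangulation theorem then guarantees that every closed orientable prime $3$-manifold appears in the resulting list. So I would fix a recursive enumeration $K_1,K_2,\dots$ of all finite simplicial complexes (for instance, for each $n$ listing all abstract simplicial complexes on the vertex set $\{1,\dots,n\}$), and for each $K_n$ perform three checks in turn.

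The first check is whether $K_n$ is a closed $3$-manifold. By work of Moise \cite{Mo52,Mo77}, a purely $3$-dimensional finite simplicial complex is a closed $3$-manifold if and only if the link of every vertex is a triangulated $2$-sphere. This is a decidable condition, because recognizing a triangulated $2$-sphere is elementary: a purely $2$-dimensional finite complex in which every edge lies in exactly two triangles and every vertex link is a single cycle is a closed surface, and by the classification of surfaces such a surface is $S^2$ precisely when it is connected and has Euler characteristic $2$. The second check, applied only if $K_n$ passes the first, is whether $K_n$ is connected and orientable; connectedness is immediate, and orientability amounts to deciding whether the tetrahedra of $K_n$ admit coherent orientations (equivalently, whether $H_3(K_n;\Z)\cong\Z$), again a finite computation.

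If $K_n$ survives both checks it is a closed orientable $3$-manifold, and the third check decides whether it is prime. Here I would invoke the algorithms of Jaco--Rubinstein \cite{jaco_efficient_2003} (alternatively Jaco--Tollefson \cite{jaco_algorithms_1995}): given a triangulation of a closed orientable $3$-manifold $M$, one can algorithmically compute the prime decomposition of $M$, and in particular decide whether $M$ is prime. I would output $K_n$ exactly when this algorithm certifies that $K_n$ is prime. The list so produced then consists of triangulations of closed orientable prime $3$-manifolds, and conversely, by Moise's triangulation theorem every closed orientable prime $3$-manifold is homeomorphic to a finite simplicial complex, hence to some $K_n$, which is output; this is precisely the assertion of recursive enumerability.

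The only substantial ingredient is the third step: the algorithmic prime decomposition rests on normal surface theory and the work of Jaco and his coauthors, and this is where I expect any real difficulty to lie. The first two steps are bookkeeping backed by the classification of closed surfaces and by Moise's characterization, and Moise's theorem also supplies the triangulability statement used to conclude. (One could in fact omit the third step if one does not insist that the list contain only prime manifolds, since a list of all closed orientable $3$-manifolds already contains, up to homeomorphism, every prime one; the version above has the minor advantage of outputting triangulations of exactly the manifolds in the class.)
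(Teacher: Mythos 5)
Your proposal is correct and follows essentially the same route as the paper: enumerate all finite simplicial complexes, recognize closed orientable $3$-manifolds by checking vertex links and orientability/homology, decide primeness via Jaco--Rubinstein or Jaco--Tollefson, and invoke Moise's triangulation theorem to guarantee every manifold in the class appears. Your version merely spells out the decidability of the $2$-sphere link check in more detail.
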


\begin{proof}
By  Moise's Theorem \cite{Mo52,Mo77} every $3$-manifold $N$
admits a finite triangulation, i.e.\ $N$ can be written as a finite simplicial complex. 

Note that there exists an algorithm which checks whether or not  a given finite simplicial complex 
represents a closed orientable 3-manifold.  Indeed, one only needs to check whether the link of each vertex is a 2-sphere,
and whether the third homology group is non-zero.
Furthermore, by work of 
Jaco--Rubinstein \cite{jaco_efficient_2003} and also by Jaco--Tollefson \cite[Algorithm 7.1]{jaco_algorithms_1995}
there exists an algorithm which given a triangulated closed, orientable 3-manifold $N$ determines whether or not the manifold is prime.

We thus go through all finite simplicial complexes and we keep the ones which represent closed, prime 3-manifolds. By Moise's Theorem any 3-manifold will eventually appear in this list of finite simplicial complexes.
\end{proof}

\begin{proof}[Proof of Proposition \ref{prop:2}]
Let $N$ be a 3-manifold. 
Note that $N$ admits a finite cover $M$ which is orientable.
 We denote by $W=M\cup_{\partial M=\partial M}M$ the double of $M$
which is now a closed, orientable  3-manifold.
Note that the `folding map' $W\to M$ is a retraction onto $M$. This implies in particular that the folding induces a homomorphism $\pi_1W\to \pi_1M$ which is a left inverse to the inclusion induced map $\pi_1M\to \pi_1W$.
Furthermore, it is a consequence of the Kneser--Milnor Prime Decomposition theorem
that $W$ is the connected sum of finitely many prime $3$-manifolds, in particular  $\pi_1W$ is the free product of fundamental groups of closed, orientable, prime 3-manifolds. 

We now write
\[ 
\ba{rcl}
 \MM&:=&\mbox{class of all $3$-manifolds,}\\
 \MM^{\op{or}}&:=&\mbox{class of all orientable $3$-manifolds,}\\
 \ol{\MM}^{\op{or}}&:=&\mbox{class of all closed, orientable $3$-manifolds,}\\
  \ol{\MM}^{\op{or}}_{\op{pr}} &:=&\mbox{class of all closed, orientable,  prime $3$-manifolds}.\ea \]
 We furthermore denote by $\GG,\GG^{\op{or}}, \ol{\GG}^{\op{or}}$ and $\ol{\GG}^{\op{or}}_{\op{pr}} $ the corresponding classes of fundamental groups.
By assumption the Membership Problem is solvable in $\ol{\GG}^{\op{or}}_{\op{pr}} $.

It follows from Theorem \ref{theorem: 3-manifolds are re} that $\ol{\MM}^{\op{or}}_{\op{pr}} $  is recursively enumerable.
Using the 2-skeleton of a triangulation one can write down a presentation for the fundamental group of any
finite connected simplicial complex. It follows that $\ol{\GG}^{\op{or}}_{\op{pr}} $ is also recursively enumerable.
It is now a consequence of  the aforementioned Kneser--Milnor Prime Decomposition theorem and 
from Corollary \ref{cor:solvablempproblemandfreeproducts} that the Membership Problem is solvable for 
$\ol{\GG}^{\op{or}}$.

It follows from the above discussion that any group in $\GG$ is virtually isomorphic to a retract of a group
in $\ol{\GG}^{\op{or}}$. Since any group in $\ol{\GG}^{\op{or}}$ is the free product of finitely many groups in 
$\ol{\GG}^{\op{or}}_{\op{pr}} $, and since $\ol{\GG}^{\op{or}}_{\op{pr}} $ is recursively enumerable it now follows that 
$\ol{\GG}^{\op{or}}$ is also recursively enumerable.
It now follows from Lemma \ref{lem:mpvirtualretract} that the Membership Problem
is solvable in $\GG$. 
\end{proof}

\subsection{The JSJ decomposition of 3-manifolds}\label{section:jsj}

In this paper, by a hyperbolic 3-manifold we mean a compact, orientable 3-manifold with empty or toroidal boundary,
such that the interior admits a complete hyperbolic structure. By a Seifert fibred manifold we always mean an orientable Seifert fibred manifold.

If $N$ is a closed, orientable, prime 3-manifold
then the  Geometrization Theorem of 
Thurston \cite{Th82} and Perelman \cite{Pe02,Pe03a,Pe03b}
says that there exists a collection of incompressible tori such that $N$ cut along the tori consists of components which are either Seifert fibred or hyperbolic. A minimal collection of such tori is furthermore unique up to isotopy.
The elements of  a minimal collection of such tori are called the \emph{JSJ tori} of $N$ and the components of $N$ cut along the JSJ tori are the \emph{JSJ components of $N$}.
The Geometrization Theorem thus in particular shows that  $\pi_1(N)$ is the fundamental group of a graph of groups where the vertex groups are fundamental groups of hyperbolic 3-manifolds and Seifert fibred manifolds and where the edge groups correspond to boundari tori of the JSJ components.

We now consider the following classes of decorated groups:
\[ \ba{rcl} \PP_{\op{hyp}}&=&
\ba{c}\mbox{all decorated groups isomorphic to  $(\pi_1(N),\{\pi_1(T_i)\}_{i\in I})$, where}
\\ \mbox{$N$ is a  hyperbolic 3-manifold and $\{T_i\}_{i\in I}$ are the boundary tori of $N$},\ea
\\[2mm]
\PP_{\op{sfs}}&=&
\ba{c}\mbox{all decorated groups isomorphic to   $(\pi_1(N),\{\pi_1(T_i)\}_{i\in I})$, where}\\
\mbox{$N$ is a Seifert fibered 3-manifold  and $\{T_i\}_{i\in I}$ are the boundary tori of $N$},\ea\\[2mm]
\PP_{\op{cl}}&=&\ba{c}\mbox{all decorated groups isomorphic to  $(\pi_1(N),\emptyset)$, where}\\
\mbox{$N$ is a closed 3-manifold which is hyperbolic or Seifert fibered.}\ea\ea \]
(Note that the definitions of these classes do not depend on the choice of base points and path connecting the base points.)
We also write $\PP=\PP_{\op{hyp}}\cup \PP_{\op{sfs}}\cup \PP_{\op{cl}}$. 

We recall that in order  to prove Theorem \ref{thm:mp} it suffices by Proposition \ref{prop:2}  to show that the Membership Problem is solvable for the class of  fundamental groups 
of closed, orientable,  prime $3$-manifolds.
By the Geometrization Theorem it therefore suffices to prove the Membership Problem for the class  of fundamental groups of graphs of groups which are based on $\PP$.

We now argue that $\PP$ satisfies Properties I to VI from Theorem \ref{thm:kmw05}.
\bn
\item[(I)] In Theorems 
\ref{thm:sfsdoublecosetseparable} and  \ref{thm:hypdoublecosetseparable} we will see that fundamental groups of Seifert fibred manifolds and hyperbolic 3-manifolds are double coset separable, i.e.\ any product $GH$ of finitely generated groups $G$ and $H$ is separable. An argument as in the proof of Lemma \ref{lem:resfinite}
now shows that $\PP$ satisfies Condition I.
\item[(II)] The proof of Condition I also shows that Condition II holds. 
\item[(III)] It is obvious that Condition III holds.
\item[(IV)] The Membership problem in $\Z^2$ 
can be solved (for any finite presentation) using basic algebra. It thus follows that $\PP$ satisfies Condition IV.
\item[(V)] In Proposition \ref{prop:intersectionsboundary} we  will show that $\PP$ satisfies Condition V.
\en

It suffices, therefore, to verify Condition VI, which is a consequence of the following theorem of Jaco--Tollefson \cite{jaco_algorithms_1995}, also proved in Jaco--Letscher--Rubinstein \cite{JLR02}.

\begin{theorem}\label{thm:hypsfsenumerable}
The classes $\PP_{\op{hyp}}, \PP_{\op{sfs}}$ and $\PP_{\op{cl}}$ are recursively enumerable.
In particular $\PP=\PP_{\op{hyp}}\cup \PP_{\op{sfs}}\cup \PP_{\op{cl}}$ is recursively enumerable.
\end{theorem}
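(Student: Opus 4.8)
The classes $\PP_{\op{hyp}}$, $\PP_{\op{sfs}}$ and $\PP_{\op{cl}}$ are recursively enumerable (and hence so is their union).

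The plan is to produce, for each of the three classes, a Turing machine that lists finite decorated group presentations meeting the definition of recursive enumerability for decorated groups. The backbone is Moise's Theorem together with the algorithmic recognition results of Jaco--Tollefson and Jaco--Letscher--Rubinstein: by Moise, every compact 3-manifold admits a finite triangulation, so one enumerates all finite simplicial complexes $K$ and first discards those that are not 3-manifolds (checking, as in the proof of Theorem \ref{theorem: 3-manifolds are re}, that vertex links are 2-spheres or 2-disks, and recording the boundary). For the closed case one additionally checks orientability via $H_3$; for the bounded case one retains only those $K$ whose boundary is a (possibly empty) union of tori. From the 2-skeleton one writes down a finite presentation $\ll A\mid R\rr$ of $\pi_1(K)$, and from the triangulations of the boundary tori one writes down, via Reidemeister--Schreier or directly from the subcomplex, finite presentations $\ll X_i\mid S_i\rr\cong\Z^2$ together with the inclusion-induced monomorphisms $f_i\colon \ll X_i\mid S_i\rr\to\ll A\mid R\rr$ (these are monomorphisms because boundary tori are $\pi_1$-injective in an irreducible manifold, which one is already in the process of guaranteeing). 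This gives a finite decorated group presentation attached to each triangulated 3-manifold with toroidal boundary, and by Moise every such manifold appears.

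The substantive point is to retain exactly those $K$ that are Seifert fibred, or hyperbolic, or (in the closed case) one of the two. Here one invokes the algorithmic recognition theorems cited in the statement: Jaco--Tollefson \cite{jaco_algorithms_1995} (and independently Jaco--Letscher--Rubinstein \cite{JLR02}) give an algorithm that decides whether a triangulated compact orientable 3-manifold is Seifert fibred, and an algorithm that decides whether it is hyperbolic (the latter by deciding that it is irreducible, atoroidal, and not small Seifert fibred, which by Geometrization is equivalent to hyperbolicity for manifolds with empty or toroidal boundary; alternatively one enumerates hyperbolic structures directly). Running these recognition algorithms on each candidate $K$ and keeping only the positive instances produces, for $\PP_{\op{hyp}}$, a list of finite decorated group presentations such that every decorated group in $\PP_{\op{hyp}}$ is isomorphic to one on the list; likewise for $\PP_{\op{sfs}}$, and for $\PP_{\op{cl}}$ one runs both recognition algorithms on the closed orientable manifolds and keeps $K$ if either returns yes, attaching the empty set of edge groups. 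Interleaving the three enumerations gives recursive enumerability of $\PP=\PP_{\op{hyp}}\cup\PP_{\op{sfs}}\cup\PP_{\op{cl}}$.

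The main obstacle is not the group-theoretic bookkeeping but the appeal to the recognition algorithms: one must be careful that what is needed is merely \emph{enumerability} of the positive instances, not full \emph{decidability}. Even if one were squeamish about the decidability of hyperbolicity, recursive enumerability of $\PP_{\op{hyp}}$ is cheap --- a compact orientable 3-manifold with toroidal boundary whose interior admits a complete finite-volume hyperbolic structure can be certified by exhibiting such a structure (e.g.\ a solution to Thurston's gluing equations on some subdivision together with a verified positivity/completeness check), and one can enumerate certificates in parallel with triangulations. Similarly, being Seifert fibred is certified by exhibiting a Seifert fibration combinatorially. Thus the only genuine input from \cite{jaco_algorithms_1995,JLR02} beyond Moise and basic algorithmic topology is that these certificates can be found uniformly, which is exactly what those papers provide; assembling the finite decorated group presentation around each certified $K$ is then routine, using the basic algorithms of Section \ref{section:basicalgorithms} to handle the edge-group presentations and the inclusion maps.
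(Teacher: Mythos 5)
Your overall strategy---enumerate triangulations, filter, and read off the decorated presentations---is different from the paper's, and the bounded cases can indeed be made to work along your lines: for a compact orientable manifold with nonempty toroidal boundary, irreducibility, boundary-irreducibility and atoroidality are decidable by normal-surface methods, Seifert-fibredness is decidable by \cite[Algorithm~8.1]{jaco_algorithms_1995}, and geometrization/hyperbolization then identifies the hyperbolic ones by elimination. The genuine gap is in the closed case, and in the attribution. Neither \cite{jaco_algorithms_1995} nor \cite{JLR02} contains ``an algorithm that decides whether it is hyperbolic'', and their Seifert recognition algorithm is stated only for irreducible, boundary-irreducible manifolds with \emph{nonempty} boundary; recognizing closed Seifert fibred spaces (in particular small ones) and closed hyperbolic manifolds is precisely the part of 3-manifold recognition that those papers do not supply. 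Your fallback certificate for hyperbolicity is also not established as stated: it is an open problem whether every cusped hyperbolic 3-manifold admits a triangulation carrying a positively oriented solution of Thurston's gluing equations (and the closed case is worse), so ``enumerate certificates in parallel with triangulations'' is not known to terminate on every hyperbolic input. The Seifert certificate for closed manifolds can be repaired (e.g.\ enumerate model triangulations from Seifert invariants and search over Pachner moves, or simply list the models and their fundamental groups directly), but as written ``exhibiting a Seifert fibration combinatorially'' is a wave of the hand at exactly the delicate point.

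The paper avoids all of this with one observation you are missing: every hyperbolic or Seifert fibred manifold with empty or toroidal boundary occurs as a JSJ component of some closed, orientable, prime 3-manifold. One therefore enumerates closed, orientable, prime manifolds (Theorem \ref{theorem: 3-manifolds are re}), computes their JSJ decompositions by \cite{jaco_algorithms_1995,JLR02}, and lists the resulting pieces together with their boundary-torus subgroups; by the Geometrization Theorem every piece is automatically hyperbolic or Seifert fibred, so no hyperbolicity recognition is ever performed, and the only recognition used is \cite[Algorithm~8.1]{jaco_algorithms_1995} on the pieces with boundary, to sort $\PP_{\op{sfs}}$ from $\PP_{\op{hyp}}$ (for $\PP_{\op{cl}}$, where a closed piece arises, hyperbolic and Seifert fibred need not be distinguished at all). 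If you want to keep your direct-filtering framework, you should at least replace your closed-case filter by this JSJ criterion: a closed, orientable, prime manifold lies in $\PP_{\op{cl}}$ if and only if its computed collection of JSJ tori is empty, which is decidable with the tools you already cite.
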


\begin{proof}
First note that any hyperbolic 3-manifold and any Seifert fibred manifold appears as the JSJ component 
of a closed, orientable, prime 3-manifold.
By Theorem \ref{theorem: 3-manifolds are re} the class of closed, orientable, prime 3-manifolds is recursively enumerable. For each such triangulated 3-manifold we can determine the JSJ components 
using the algorithm of  Jaco--Tollefson \cite{jaco_algorithms_1995} and also Jaco--Letscher--Rubinstein \cite{JLR02}.
We can thus recursively enumerate the class of all JSJ components of closed, orientable, prime 3-manifolds,
in particular, by the above, we can recursively enumerate the class  of 3-manifolds which are either hyperbolic or  Seifert fibred.

For the manifolds with non-trivial boundary we can furthermore by \cite[Algorithm~8.1]{jaco_algorithms_1995} 
 decide whether or not the  3-manifold is Seifert fibred.

The theorem is now an straightforward consequence of the above algorithms.
\end{proof}

\subsection{Subgroups of fundamental groups of Seifert fibred manifolds and of hyperbolic 3-manifolds}\label{section:subgroupshypsfs}
In this section we will recall well-known results about subgroups of  fundamental groups of Seifert fibred manifolds and of hyperbolic 3-manifolds. We will in particular see that the class of decorated groups $\PP$ from Section \ref{section:jsj} satisfies Conditions I and II. We will also need some of the results from this section in the next section when we deal with Condition V. 

In the following, given a subgroup $\Gamma $ of a group $\pi$ we say that \emph{$\Gamma$ is a virtual retract of $\pi$} if there exists a finite-index subgroup $\pi_0$ which contains $\Gamma$ and such that $\Gamma$ is a retract of $\pi_0$. 

The following theorem is proved implicitly by Scott \cite{Sc78}.

\begin{theorem}\label{thm:surfaceretract}
Any finitely generated subgroup of a surface group is a virtual retract.
\end{theorem}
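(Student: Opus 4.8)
The plan is to prove that any finitely generated subgroup $H$ of a surface group $\pi = \pi_1(\Sigma)$ is a virtual retract, i.e.\ that there is a finite-index subgroup $\pi_0 \leq \pi$ with $H \leq \pi_0$ and a retraction $\pi_0 \to H$. The starting point is Scott's subgroup separability theorem \cite{Sc78}: surface groups are LERF. The standard geometric consequence of Scott's argument — which is what the theorem means by ``proved implicitly by Scott'' — is that any finitely generated subgroup of a surface group is a \emph{geometric} subgroup after passing to a finite cover, and geometric subgroups are retracts. I would make this precise as follows.

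First I would reduce to the case where $\Sigma$ is a closed surface of negative Euler characteristic (for a surface with boundary or non-negative Euler characteristic the group is free or virtually free, and there every finitely generated subgroup of infinite index is a free factor of a finite-index subgroup, so the claim is elementary; one can also just embed any surface with boundary into a closed one). Now let $H \leq \pi_1(\Sigma)$ be finitely generated. The cover $\widehat{\Sigma} \to \Sigma$ corresponding to $H$ has $\pi_1(\widehat\Sigma) = H$; since $H$ is finitely generated, by the Scott core theorem there is a compact core $C \subseteq \widehat\Sigma$, a compact subsurface with $\pi_1(C) = H$, whose boundary consists of finitely many curves. The key point from Scott's LERF argument is that one can find a finite-sheeted intermediate cover $\pi_0$-cover $\Sigma_0 \to \Sigma$, with $H \leq \pi_0 = \pi_1(\Sigma_0)$, in which the core $C$ lifts to an embedded incompressible subsurface $C_0 \subseteq \Sigma_0$ with $\pi_1(C_0) = H$. (This is exactly the content of Scott's separability proof: one separates each boundary curve of the core from the subsurface inside a finite cover, then takes a common refinement.)

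Once $H = \pi_1(C_0)$ where $C_0$ is an embedded incompressible subsurface of the closed surface $\Sigma_0$, the retraction $\pi_0 = \pi_1(\Sigma_0) \to \pi_1(C_0) = H$ comes for free: $\Sigma_0 = C_0 \cup_{\partial} C_0'$ where $C_0'$ is the closure of the complement, and one builds the retraction either topologically by collapsing each component of $C_0'$ back onto the boundary component(s) of $C_0$ it is glued to, or group-theoretically via the graph-of-groups (van Kampen) decomposition of $\pi_1(\Sigma_0)$ along $\partial C_0$: in that decomposition $\pi_1(C_0)$ is one of the vertex groups, the incident edge groups are cyclic and inject, and killing all the other vertex generators and identifying each edge group with its image in $\pi_1(C_0)$ gives a well-defined homomorphism $\pi_1(\Sigma_0) \to \pi_1(C_0)$ restricting to the identity on $\pi_1(C_0)$. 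Thus $H$ is a retract of the finite-index subgroup $\pi_0$, proving the theorem.

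The main obstacle is the middle step — producing the finite cover $\Sigma_0$ in which the compact core $C$ embeds as an incompressible subsurface — and this is precisely the nontrivial geometric input, which I would simply cite from Scott \cite{Sc78} (with the correction \cite{Sc85}); the only thing one must check carefully is that Scott's construction can be arranged so that the core embeds (not merely that each boundary curve is separated), which follows by taking the finite cover associated to the intersection of the finitely many finite-index subgroups separating the individual boundary curves from $H$ and then passing to the component containing the lift of $C$. The graph-of-groups retraction argument at the end is routine and I would only sketch it. Everything else is bookkeeping about surfaces.
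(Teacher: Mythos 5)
Your overall strategy (quote Scott's ``almost geometric'' theorem and then retract off the complement of the embedded core) is the intended reading of the citation to Scott, but the step you describe as coming ``for free'' --- that once $H=\pi_1(C_0)$ for an embedded incompressible subsurface $C_0\subseteq\Sigma_0$ one automatically gets a retraction $\pi_1(\Sigma_0)\to\pi_1(C_0)$ --- is where the real content lies, and both of your proposed constructions break down. Topologically, a compact surface with boundary does not retract onto the union of all of its boundary circles: the image of a connected space is connected, and even when a complementary piece has a single boundary circle, positive genus obstructs a retraction because that boundary curve is null-homologous in the piece. So you cannot ``collapse each component of $C_0'$ back onto the boundary component(s) of $C_0$ it is glued to.'' Group-theoretically, killing the generators of the other vertex groups is not compatible with the edge relations: each boundary curve of $C_0$ is identified in $\pi_1(\Sigma_0)$ with an element of a complementary vertex group, so killing those vertex groups also kills the boundary curves inside $\pi_1(C_0)$; the resulting quotient is the fundamental group of the capped-off surface, and the composite $\pi_1(C_0)\to\pi_1(\Sigma_0)\to$ quotient is far from the identity.

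In fact the intermediate claim is false, not merely unproved: let $C_0$ be an annular neighbourhood of a separating simple closed curve $c$ in a closed surface $\Sigma_0$. Then $C_0$ is embedded and incompressible, but $c$ is a product of commutators in $\pi_1(\Sigma_0)$, so every homomorphism $\pi_1(\Sigma_0)\to\langle c\rangle\cong\Z$ kills $c$, and $\pi_1(C_0)=\langle c\rangle$ is not a retract of $\pi_1(\Sigma_0)$. (By contrast, for a one-holed torus in a genus-$2$ surface a retraction does exist, e.g.\ $x\mapsto b$, $y\mapsto a$ in the standard presentation --- so the existence of a retraction is a genuinely delicate matter, not a formality.) Thus making $H$ geometric in some finite cover is not yet enough; one must choose the finite cover (or a deeper one) so that the embedded subsurface sits in it in a way that admits a retraction, and this is exactly the nontrivial part of what the paper means by ``proved implicitly by Scott'': in Scott's right-angled pentagon argument the retraction is the canonical retraction attached to a convex subcomplex of a reflection group, not a collapse of complementary subsurfaces. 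A smaller slip in your reduction: the torus and Klein bottle groups are not virtually free, although those cases are indeed elementary by linear algebra.
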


Note that finitely generated subgroups which are virtual retracts are
in particular separable, see e.g. \cite[(G.10)]{AFW12}. The above theorem thus implies that surface groups are subgroup separable. For the record we note that this furthermore implies the following theorem;
see \cite{Sc78,Sc85} for details.

\begin{theorem}\label{thm:sfslerf}
Fundamental groups of Seifert fibred manifolds are subgroup separable.
\end{theorem}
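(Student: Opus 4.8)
The plan is to deduce subgroup separability of $\pi_1(N)$ for a Seifert fibred manifold $N$ from Theorem \ref{thm:surfaceretract} (the analogous statement for surface groups) together with the structure of Seifert fibred $3$-manifold groups. Recall that if $N$ is Seifert fibred then $\pi_1(N)$ sits in a short exact sequence
\[ 1 \to Z \to \pi_1(N) \to \pi_1^{\op{orb}}(B) \to 1, \]
where $Z$ is the cyclic subgroup generated by the regular fibre (which is central when $N$ is orientable, and has a central subgroup of index at most $2$ in general), and $\pi_1^{\op{orb}}(B)$ is the orbifold fundamental group of the base $2$-orbifold $B$. The first reduction I would make is to pass to a finite cover: $N$ is finitely covered by a circle bundle $\widetilde{N}$ over a surface $\Sigma$, so $\pi_1(\widetilde N)$ is a central extension of a surface group $\pi_1(\Sigma)$ by $\Z$. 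Since subgroup separability passes to finite-index overgroups (a standard fact, proved by taking preimages of separating finite-index subgroups and intersecting with the cosets), it suffices to treat $\pi_1(\widetilde N)$, i.e.\ the case $1 \to \Z \to \pi \to \pi_1(\Sigma) \to 1$ with $\Z$ central.

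The main step is then the following: given a finitely generated subgroup $H \le \pi$ and $g \notin H$, produce a finite quotient of $\pi$ separating $g$ from $H$. I would split into two cases according to whether $H \cap Z$ has finite or infinite index in $Z$. If $H$ contains a finite-index subgroup of $Z$, then $H$ has finite image $\bar H$ in $\pi_1(\Sigma)$ combined with the full centre up to finite index, and one reduces essentially to separating in a virtually-$\Z$-by-(finite) situation, which can be done by hand; more precisely $HZ$ has finite index in $\pi$ and one works inside it using that $Z$ itself is separable and that the quotient is a surface group. If $H \cap Z$ is trivial or infinite-index in $Z$, then $H$ injects (or nearly injects) into $\pi_1(\Sigma)$, and I would use Theorem \ref{thm:surfaceretract}: the image $\bar H$ is a virtual retract of the surface group $\pi_1(\Sigma)$, hence separable there; pulling the separating finite quotient back to $\pi$ handles the component of $g$ transverse to the fibre, while the central $\Z$-direction is handled by the residual finiteness / separability of $Z$ in $\pi$ (one can separate $g$ from $HZ$ using the surface quotient, or $g$ from $H$ inside $HZ$ using a finite cyclic quotient of the centre). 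Combining the two finite quotients — one detecting the base and one detecting the fibre coordinate — gives a finite quotient of $\pi$ in which the image of $g$ avoids the image of $H$.

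The hard part will be the bookkeeping in the case analysis above: making sure that in the mixed case one can simultaneously control the fibre coordinate and the base coordinate of $g$ relative to $H$, and handling the non-orientable base orbifolds and exceptional fibres (which is why the reduction to a genuine circle bundle over a surface via a finite cover is doing real work). I should also note that since we only invoke subgroup separability abstractly (it is used in the excerpt only to solve the Membership Problem for Seifert pieces via the proof of Lemma \ref{lem:resfinite}), I do not need an effective version here — plain separability suffices, so none of the finite quotients need to be produced algorithmically in this lemma. The cleanest write-up would cite Scott \cite{Sc78,Sc85} for the final assembly, using Theorem \ref{thm:surfaceretract} as the one nontrivial input, exactly as the excerpt indicates.
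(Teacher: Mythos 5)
Your route is the same as the paper's: the paper gives no argument for Theorem \ref{thm:sfslerf} beyond observing that it follows from Theorem \ref{thm:surfaceretract} and citing Scott \cite{Sc78,Sc85}, and your plan reconstructs exactly that derivation (pass to a finite cover which is a circle bundle over a surface, use that subgroup separability passes to finite-index overgroups, then feed in the surface-group result). The case $H\cap Z\neq 1$ is also handled correctly in your sketch: there $H$ is the full preimage of its image in $\pi/(H\cap Z)$, which is virtually a surface group, so separability descends.

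The one step that would fail as written is the case $H\cap Z=1$ with $g\in HZ\setminus H$. You propose to separate $g$ from $H$ inside $HZ\cong H\times Z$ by a finite cyclic quotient of the centre and then to ``combine'' this with a finite quotient detecting the base. But a finite quotient of $HZ$ does not extend to one of $\pi$ for free, and in a finite quotient $q$ of $\pi$ the subgroup $q(H)$ may well contain $q(z)$ even though $H\cap Z=1$; separability is not transitive, so ``$\overline{H}$ separable in $\pi_1(\Sigma)$'' together with ``$Z$ separable in $\pi$'' does not by itself give $q(g)\notin q(H)$. The repair is to use the full strength of Theorem \ref{thm:surfaceretract} rather than only the separability it implies: choose a finite-index subgroup $\overline{\Gamma}_0\le\pi_1(\Sigma)$ containing $\overline{H}$ with a retraction $\overline{r}\colon\overline{\Gamma}_0\to\overline{H}$, let $\Gamma_0\le\pi$ be its preimage (which contains $HZ$), and note that the composition $\Gamma_0\to\overline{\Gamma}_0\stackrel{\overline{r}}{\to}\overline{H}\cong H$ is a retraction of $\Gamma_0$ onto $H$, the last isomorphism being the inverse of the projection, which is injective on $H$ because $H\cap Z=1$. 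Retracts of residually finite groups are separable, and separability passes from the finite-index subgroup $\Gamma_0$ up to $\pi$; this is Scott's argument, and it is also the reason the paper records Theorem \ref{thm:surfaceretract} in terms of virtual retracts rather than mere separability. With that adjustment (and choosing the finite cover so that the base surface is orientable, so the fibre is genuinely central), your plan is sound, and deferring the remaining assembly to \cite{Sc78,Sc85} matches what the paper does.
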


In fact a somewhat stronger statement holds true. In order to state the result we recall that a group $\pi$ is  called \emph{double-coset separable} if given any finitely generated subgroups $G,H\subset \pi$ the product $GH\subset \pi$ is separable. 
The following theorem was  proved by Niblo \cite{Ni92} building on the aforementioned work of  Scott \cite{Sc78}. 

\begin{theorem}\label{thm:sfsdoublecosetseparable}
The fundamental group of any Seifert fibred manifold is double-coset separable.
\end{theorem}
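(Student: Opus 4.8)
The plan is to reduce the statement about Seifert fibred manifolds to a statement about surface groups, where double-coset separability is already known, and then to handle the two extra features of a Seifert fibred group over a surface group, namely the (virtually) central fibre and the possible failure of the base orbifold to be a genuine surface. First I would invoke the structure theory: if $N$ is a Seifert fibred manifold, then $\pi_1(N)$ fits into a central extension $1\to Z\to \pi_1(N)\to \pi_1^{\mathrm{orb}}(B)\to 1$, where $Z$ is the (infinite or finite) cyclic subgroup generated by a regular fibre and $\pi_1^{\mathrm{orb}}(B)$ is a $2$-orbifold group; after passing to a finite-index subgroup $\pi_0$ one may assume $B$ is an honest surface, $Z$ is central and infinite cyclic (the case of finite $Z$, i.e.\ $N$ with $S^2$ or bad-orbifold base, being either easily separable or handled by the fact that $\pi_1(N)$ is then virtually $\Z\times\Z$ or finite, which is double-coset separable for trivial reasons). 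By Theorem~\ref{thm:surfaceretract} finitely generated subgroups of surface groups are virtual retracts, which in particular makes surface groups double-coset separable — this is the input from Niblo's argument that I would quote.

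The main step is then: double-coset separability is inherited under the passage from $\pi_1^{\mathrm{orb}}(B)$ to the central extension $\pi_1(N)$, and under passage to finite-index overgroups. For the finite-index reduction I would argue as in Lemma~\ref{lem:mpvirtualretract}-style bookkeeping: if $\pi_0\le\pi$ has finite index and $G,H\le\pi$ are finitely generated, then $GH$ is a finite union of sets of the form $g(G\cap\pi_0)(H\cap\pi_0)h$ inside cosets of $\pi_0$, each factor is finitely generated (surface and Seifert groups being coherent/slender enough for this), and a finite union of translates of separable subsets of $\pi_0$ is separable in $\pi$ provided $\pi_0$ itself has separable double cosets. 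For the central extension step, I would use that if $Z$ is central in $\pi_1(N)$ with quotient $Q=\pi_1^{\mathrm{orb}}(B)$ and $q\colon\pi_1(N)\to Q$ is the projection, then for finitely generated $G,H\le\pi_1(N)$ the product $GH$ projects to $q(G)q(H)$, and $GH = q^{-1}\big(q(G)q(H)\big)\cap (GH)$; more precisely, given $x\notin GH$, either $q(x)\notin q(G)q(H)$, in which case double-coset separability of $Q$ produces a finite quotient of $Q$, hence of $\pi_1(N)$, detecting this, or $q(x)\in q(G)q(H)$ and the obstruction lives in the central $Z$-direction, where one uses that $Z\cap(GH\cdot x^{-1})$ (suitably interpreted) is a subgroup of $\Z$ and $\Z$ is LERF, together with residual finiteness of $\pi_1(N)$, to separate $x$ in a finite quotient.

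The delicate point — and the step I expect to be the main obstacle — is the centre: controlling how $GH$ meets a coset of the central $Z$, because $G$ and $H$ need not contain $Z$, so $q(G)q(H)=q(GH)$ does not immediately recover $GH$ from $Z$-data in an obviously separable way. The right way to handle this is probably to note that $\widehat G:=GZ$ and $\widehat H:=HZ$ are finitely generated, $\widehat G\widehat H = q^{-1}(q(G)q(H))$, and then to separate inside $\widehat G\widehat H$ (using $Q$'s double-coset separability pulled back, together with the fact that $\widehat G/ (G\cap Z)$ etc.\ are controlled) before cutting down to $GH$ using that the intersection with a fixed $Z$-coset is governed by a coset of a subgroup of $\Z$, which is clopen in the profinite topology since $\pi_1(N)$ is residually finite and $Z$ is separable. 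Finally I would package everything: double-coset separability for surface groups (Niblo/Scott), lift through the central extension, pass to the original finite-index-over group $\pi_1(N)$, and conclude. I would remark that the statement is used in the paper only to verify Condition~I of Theorem~\ref{thm:kmw05} via an enumeration argument as in Lemma~\ref{lem:resfinite}, so for that application only the separability (not effectiveness) of $GH$ is needed, which is exactly what this theorem asserts.
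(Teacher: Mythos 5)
The paper does not actually prove this theorem: it is quoted from Niblo \cite{Ni92} (building on Scott \cite{Sc78}), so your proposal is an attempted reconstruction of that cited result rather than a rival to an in-paper argument. Your overall architecture is the right one (pass to a finite-index subgroup which is a central extension $1\to Z\to\pi_0\to Q\to 1$ with $Z\cong\Z$ and $Q$ a surface group; quote double-coset separability of surface groups; check that double-coset separability passes to finite-index overgroups via $GH=\bigcup_{i,j}g_i(G\cap\pi_0)(H\cap\pi_0)h_j$ --- that step is fine). But two points are not right as written. First, Theorem \ref{thm:surfaceretract} does \emph{not} ``in particular'' make surface groups double-coset separable: virtual retractions give separability of finitely generated subgroups, not of products $GH$; double-coset separability of surface groups is precisely Niblo's theorem and has to be quoted as such, not derived from the retract property.

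Second, and more seriously, the central-extension step --- which you correctly flag as the delicate point --- is not actually carried out. Your key assertion, that $GH$ meets a fixed coset $xZ$ in (a translate of) a coset of a subgroup of $\Z$, is in fact true: one checks, using only centrality of $Z$, that $W:=GH\cap Z$ is a subgroup (if $g_1h_1\in Z$ then also $h_1g_1=g_1^{-1}(g_1h_1)g_1\in Z$, whence $(g_1h_1)^{-1}=g_1^{-1}h_1^{-1}\in GH$, etc.) and that $GH\cap xZ$ is either empty or a single coset $vW$. But you only guess at this (``is probably to note''), and your proposed conclusion --- that this coset is ``clopen in the profinite topology'' and that one can cut $\widehat{G}\widehat{H}$ down to $GH$ --- is not a proof: the coset $vW$ is merely closed (not open), and $GH$ is an \emph{infinite} union of such fibrewise closed pieces as $q(x)$ ranges over $q(G)q(H)$, so no separability of $GH$ follows from separability of each piece. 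The correct finish uses the uniform coset structure: since $GH\cdot nZ\cap xZ=vW(nZ)$, one can choose $n$ with $x\notin GH\cdot nZ$ (take $n=|W|$-index data, e.g.\ $n$ with $v^{-1}x\notin W+n\Z$), and then separate the image of $x$ from $\bar G\bar H$ in $\pi_0/nZ$, which is virtually a surface group (the mod-$n$ Euler class dies on a suitable finite cover) and hence double-coset separable by the surface case plus the finite-index argument. With that lemma and that finishing move supplied, your plan does go through; without them it has a genuine hole exactly at the step you identified. (Your dismissal of the exceptional cases is also too quick --- Euclidean and Nil Seifert manifolds have fundamental groups virtually $\Z^3$ or nilpotent, not ``virtually $\Z\times\Z$ or finite'' --- but these cases are covered by the same extension argument with virtually abelian base, or by separability of products of subgroups in polycyclic groups.)
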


We now turn to the study of fundamental groups of hyperbolic 3-manifolds.  The first key result is the  Tameness Theorem of Agol \cite{Ag04} and Calegari--Gabai \cite{CG06}:

\begin{theorem}\label{thm:tameness} 
Let $N$ be a hyperbolic $3$-manifold and $\Gamma\subset \pi:=\pi_1(N)$ a finitely generated subgroup. Then precisely one of the following holds:
\bn
\item either $\Gamma$ is a relatively quasiconvex subgroup  of $\pi$, or
\item there exists a finite-index subgroup $\pi_0$ of $\pi$ which contains $\Gamma$ as a normal subgroup with $\pi_0/\Gamma\cong \Z$.
\en
\end{theorem}

We will not be concerned with the precise definition of `relatively quasiconvex'. We will use Theorem \ref{thm:tameness} only in conjunction with the following theorem,
which is a consequence of work of Haglund \cite{Hag08} and 
the Virtually Compact Special Theorem which was proved by  Wise \cite{Wi12a,Wi12b}
for hyperbolic 3-manifolds with boundary and by Agol \cite{Ag13} for closed hyperbolic 3-manifolds.
(See also \cite{CDW12} or \cite{SaW12}. We refer to \cite{AFW12} for  details and precise references.)

\begin{theorem}\label{thm:hypsubgroup}
Let $N$ be a hyperbolic $3$-manifold and $\Gamma\subset \pi:=\pi_1(N)$ a finitely generated subgroup. 
If $\Gamma$ is a relatively quasiconvex subgroup of $\pi$, then $\Gamma$ is a virtual retract of $\pi$.
\end{theorem}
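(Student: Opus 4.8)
The plan is to combine the structural dichotomy from the Tameness Theorem (already in hand as Theorem \ref{thm:tameness}) with the Virtually Compact Special Theorem via Haglund's criterion on quasiconvex subgroups of hyperbolic groups (and its relative analogue). Concretely, let $N$ be a hyperbolic 3-manifold and $\Gamma\subset\pi=\pi_1(N)$ a finitely generated, relatively quasiconvex subgroup. First I would reduce to the word-hyperbolic setting: if $N$ is closed, then $\pi$ is word-hyperbolic and $\Gamma$ is genuinely quasiconvex; if $N$ has toroidal boundary, then $\pi$ is hyperbolic relative to its peripheral $\Z^2$ subgroups, and $\Gamma$ is relatively quasiconvex. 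In both cases the Virtually Compact Special Theorem of Wise \cite{Wi12a,Wi12b} (bounded case) and Agol \cite{Ag13} (closed case) provides a finite-index subgroup $\pi_1\leq\pi$ that is the fundamental group of a compact special cube complex; thus $\pi$ is virtually compact special.

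The key step is then Haglund's theorem \cite{Hag08}: in a group acting properly and cocompactly on a CAT(0) cube complex with special quotient, every (relatively) quasiconvex subgroup is a virtual retract. Applying this inside $\pi_1$, the subgroup $\Gamma\cap\pi_1$ — which is still finitely generated and (relatively) quasiconvex, being a finite-index subgroup of $\Gamma$ — is a virtual retract of $\pi_1$: there is a finite-index $\pi_2\leq\pi_1$ containing $\Gamma\cap\pi_1$ together with a retraction $\pi_2\to\Gamma\cap\pi_1$. Since $[\pi:\pi_2]<\infty$, this already shows $\Gamma\cap\pi_1$ is a virtual retract of $\pi$.

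The remaining point is to upgrade from the finite-index subgroup $\Gamma\cap\pi_1$ to $\Gamma$ itself, i.e.\ to check that the property ``is a virtual retract of $\pi$'' is insensitive to passing between $\Gamma$ and a finite-index overgroup inside $\pi$. The standard argument: if $\Delta\leq\Gamma$ has finite index and $\Delta$ is a retract of some finite-index $\pi_2\leq\pi$ via $r\colon\pi_2\to\Delta$, set $\pi_3:=\pi_2\cap\Gamma$; then $\Delta\leq\pi_3\leq\Gamma$ with $[\Gamma:\pi_3]<\infty$, and one produces a retraction $\pi_3'\to\Gamma$ on a suitable finite-index subgroup of $\pi$ by an intersection/transfer argument using that $\Gamma$ is itself finitely generated and that $\Delta$ has finite index in $\Gamma$ (pass to the normal core of $\Delta$ in $\Gamma$ and average, or invoke that virtual retracts are closed under commensuration within an ambient group). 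I expect \emph{this last bookkeeping step} — tracking finite-index subgroups so that a single finite-index subgroup of $\pi$ simultaneously contains $\Gamma$ and retracts onto it — to be the only genuinely fiddly part; the substantive content is entirely carried by Theorem \ref{thm:tameness}, the Virtually Compact Special Theorem, and Haglund's criterion, all of which may be cited. An alternative is simply to quote the precise packaging of these facts assembled in \cite{AFW12}, which states exactly that relatively quasiconvex subgroups of hyperbolic 3-manifold groups are virtual retracts, and then this theorem is immediate.
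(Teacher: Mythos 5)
Your overall strategy coincides with the paper's: the paper offers no proof of Theorem \ref{thm:hypsubgroup} at all, but simply records it as a consequence of Haglund \cite{Hag08} together with the Virtually Compact Special Theorem of Wise \cite{Wi12a,Wi12b} and Agol \cite{Ag13}, pointing to \cite{CDW12,SaW12} and delegating the precise references and assembly to \cite{AFW12}. So your fallback option (``quote the packaging in \cite{AFW12}'') is literally what the paper does, and to that extent the proposal matches.

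However, if your sketch is meant as an actual proof rather than a citation, the step you yourself flag as ``bookkeeping'' is a genuine gap, and your two suggested fixes do not work as stated. Haglund's canonical completion and retraction, applied inside a finite-index special subgroup $\pi_1\le\pi$, yields only that $\Gamma\cap\pi_1$ is a virtual retract; to conclude for $\Gamma$ you assert that the virtual retract property ``is insensitive to passing between $\Gamma$ and a finite-index overgroup inside $\pi$.'' This is not a known general fact, and neither of your devices establishes it: ``averaging'' a retraction is meaningless when the target $\Gamma$ is non-abelian (transfer-type arguments only produce maps to abelianizations), and passing to the normal core of $\Delta=\Gamma\cap\pi_1$ in $\Gamma$ does not produce a retraction of any finite-index subgroup of $\pi$ onto $\Gamma$. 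If one tries the natural construction --- replace the retraction $r\colon\pi_2\to\Delta$ by its restriction to an $\Gamma$-invariant finite-index subgroup and define $\rho(hk)=h\,r(k)$ on $\Gamma\pi_2$ --- one finds that $\rho$ is a homomorphism only if $r$ is equivariant for the conjugation action of $\Gamma$, which is exactly the nontrivial point; in the literature this is handled by exploiting the canonicity of the completion/retraction construction, or by proving the statement directly for $\Gamma$ itself (this is what \cite{CDW12}, \cite{SaW12} and the account in \cite{AFW12} do), not by a formal commensuration argument. A second, smaller caution: in the cusped case a relatively quasiconvex $\Gamma$ may meet the peripheral $\Z^2$ subgroups in infinite-index subgroups, so one cannot simply treat it as a cocompactly cubulated quasiconvex subgroup; this is precisely why the relative versions in \cite{SaW12,CDW12} are invoked. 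So either state the theorem as a citation (as the paper does), or be aware that the ``upgrade'' step is where the real content of a self-contained proof lies.
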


The combination of Theorems \ref{thm:tameness} and \ref{thm:hypsubgroup} implies that the fundamental group of a hyperbolic 3-manifold is subgroup separable; we refer to \cite[(G.10)~and~(G.11)]{AFW12} for details.  In fact, by work of Wise and Hruska, the following stronger result holds.

\begin{theorem}\label{thm:hypdoublecosetseparable}
The fundamental group of any hyperbolic 3-manifold is double-coset separable.
\end{theorem}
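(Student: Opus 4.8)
The plan is to deduce double-coset separability of hyperbolic 3-manifold groups from what we already know about relatively quasiconvex subgroups, using the virtually compact special machinery. The statement asserts that if $N$ is a hyperbolic 3-manifold and $G,H\subset \pi:=\pi_1(N)$ are finitely generated subgroups, then the product $GH$ is separable in $\pi$. The fundamental structural input is that $\pi$ is virtually compact special: it has a finite-index subgroup $\pi_0$ acting freely and cocompactly on a CAT(0) cube complex $X$ with a special (in the sense of Haglund--Wise) quotient. The key fact about such groups, due to Haglund--Wise, is that convex subcomplexes give rise to \emph{virtual retracts}, and that the double coset of two such "cubically convex" subgroups is separable. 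So the problem reduces to passing from arbitrary finitely generated $G,H$ to quasiconvex ones, and to handling the degenerate (geometrically infinite) case via Theorem~\ref{thm:tameness}.

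First I would reduce to the case where $N$ is closed or has toroidal boundary and $\pi$ is the fundamental group of a compact special cube complex, using the standard observation (as in the proof of Lemma~\ref{lem:mpvirtualretract}) that double-coset separability of a group follows from that of a finite-index subgroup: if $\pi_0\leq\pi$ has finite index, write $\pi=\bigsqcup_j \pi_0 t_j$, intersect $G$ and $H$ with $\pi_0$, and express $GH$ as a finite union of sets of the form $g_a (G\cap\pi_0)(H\cap\pi_0) h_b$; separability of each piece in $\pi_0$, together with the fact that finite-index subgroups are separable and separability is preserved under translation, gives separability of $GH$ in $\pi$. So it suffices to prove the statement for $\pi = \pi_1(X)$ with $X$ compact special.

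Next, for such $\pi$, I would split into the two cases of Theorem~\ref{thm:tameness} applied to each of $G$ and $H$. If both $G$ and $H$ are relatively quasiconvex, then (as $\pi$ is hyperbolic relative to its cusp subgroups and compact special) each of $G$, $H$ is a \emph{virtual retract} carried by a convex subcomplex after passing to a finite cover — this is exactly the content of Theorem~\ref{thm:hypsubgroup} together with Haglund's canonical completion and retraction. The work of Wise and Hruska on relatively quasiconvex subgroups of relatively hyperbolic groups acting on cube complexes then shows that the product $GH$ is separable: concretely, one passes to a finite cover of $X$ in which the convex cores of $G$ and $H$ embed, and uses the retraction onto a combined convex subcomplex together with residual finiteness of the (special, hence linear) group to separate $GH$ from any point not in it. If instead $G$ (say) falls into the second case of Theorem~\ref{thm:tameness}, so some finite-index $\pi_1\leq\pi$ contains $G$ normally with $\pi_1/G\cong\Z$, then $G$ is the kernel of a map $\pi_1\to\Z$ and in particular $G$ is separable \emph{and} every product $G\cdot K$ with $K$ finitely generated is separable in $\pi_1$ (since $GK$ is a union of fibers of $\pi_1\to\Z$ that is closed iff its image in $\Z$ is), and one promotes this to $\pi$ by the finite-index argument above; the same works if $H$ is the degenerate one.

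\medskip

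The main obstacle I expect is the relatively quasiconvex, non-cocompact case: when $N$ has cusps, $\pi$ is only relatively hyperbolic, relatively quasiconvex subgroups need not be quasiconvex in the CAT(0) metric, and the clean "convex subcomplex gives a retract" picture must be replaced by the relative version developed by Hruska--Wise (sparse cube complexes, relative cocompactness) and Sageev--Wise / Cooper--Futer--Wise on cusped cube complexes. Establishing that the \emph{double} coset $GH$ — not just each factor — is separable in this relative setting is the delicate point, and is where one genuinely invokes the cited work of Wise and Hruska rather than an elementary argument; everything else is bookkeeping with finite-index subgroups and the dichotomy of Theorem~\ref{thm:tameness}.
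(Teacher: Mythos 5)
Your proposal is correct and follows essentially the same route as the paper: split on the tameness dichotomy, handle the virtual-fibre case via the induced epimorphism to $\Z$ (so that $GH$ becomes a preimage of a subgroup of $\Z$), and in the relatively quasiconvex case invoke Hruska (relative quasiconvexity implies quasi-isometric embedding) together with Wise's double-coset separability theorem. Your additional cube-complex/virtual-retract sketch is not by itself a complete argument for separability of the \emph{double} coset in the cusped setting, but since you explicitly defer that step to the cited work of Hruska and Wise --- which is exactly what the paper does --- the argument matches the paper's proof in substance.
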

\begin{proof}
Let $N$ be a hyperbolic 3-manifold and let $G,H$ be finitely generated subgroups of $\pi=\pi_1N$.  We need to prove that the double-coset $GH$ is separable.  Suppose, therefore, that $g\in\pi\smallsetminus GH$.

Suppose first that $G$ is a virtual fibre in $\pi$.  Replacing $\pi$ by a subgroup of finite index, we may assume that we have an epimorphism
\[
\eta:\pi\to\Z
\]
with kernel $G$.   Then $\eta(g)\notin\eta(H)$.  For any $n$ such that $\eta(g)\notin n\Z$ by $\eta(H)\subseteq n\Z$, the concatenation
\[
\pi\stackrel{\eta}{\to}\Z\to\Z/n\Z
\]
separates $g$ from $GH$, as required.

We may therefore assume that both $G$ and $H$ are relatively quasiconvex in $\pi$.  By a result of Hruska  \cite[Corollary~1.6]{Hr10}, $G$ and $H$ are both quasi-isometrically embedded, and therefore the double coset $GH$ is separable by a theorem of Wise \cite[Theorem~16.23]{Wi12a}.
\end{proof}

\subsection{Computing generating sets for intersections}  \label{section:proofcomputegeneratingssets}

In order to prove Theorem \ref{thm:mp} it now suffices to show that the class of decorated groups $\PP$ which we introduced in Section \ref{section:jsj} satisfies Condition V. We will deal with this issue in this section.

\begin{lemma}\label{lem: Intersections for hyperbolic manifolds}
There exists an algorithm which takes as input a 
 finite decorated group presentation $\Pi=(\ll A\mid R\rr,\{\ll X_i\mid S_i\rr, f_i\}_{i\in I})$ and a finite subset $Y$ of $F(A)$ and which,
 if $\Pi$ represents a decorated group in $\PP_{\op{hyp}}$,  gives for each $i\in I$ as output a finite generating set for $\ll f_i(X_i) \rr\cap \ll Y\rr$  as a subgroup of $\ll A\mid R\rr$.
\end{lemma}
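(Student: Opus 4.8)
The statement to be proved is Lemma~\ref{lem: Intersections for hyperbolic manifolds}: given a decorated group presentation $\Pi$ representing $(\pi_1 N,\{\pi_1 T_i\})$ for a hyperbolic $3$-manifold $N$, and a finite set $Y\subset F(A)$, compute a finite generating set for $\ll f_i(X_i)\rr\cap\ll Y\rr$ for each boundary torus $T_i$. The plan is to reduce to the situation where one of the two subgroups is a \emph{virtual retract} of $\pi:=\ll A\mid R\rr$, and then exploit the fact that in that situation the intersection with any other finitely generated subgroup can be computed effectively.

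First I would set $P_i:=\ll f_i(X_i)\rr$, which is a peripheral $\Z^2$, and $Q:=\ll Y\rr$. By Theorem~\ref{thm:tameness} applied to $Q$, either $Q$ is relatively quasiconvex in $\pi$, or $Q$ is normal in a finite-index subgroup $\pi_0$ with $\pi_0/Q\cong\Z$; the relevant case for the reduction can be distinguished algorithmically by running in parallel the searches that would certify each alternative (normality of $Q$ in some finite-index subgroup, via Lemmas~\ref{lem:allfiniteindexsubgroups} and~\ref{lem:normal} and~\ref{lem:givesdesiredquotient}, versus the virtual-retract certificate for $Q$ coming from Theorem~\ref{thm:hypsubgroup} via Lemma~\ref{lem:findretract}, searching over finite-index subgroups of $\pi$ with $\mathcal D=$ free/surface-type retract targets as in the enumeration used for $\PP$). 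In the fibre case, $Q\cap P_i$ is either trivial or all of $P_i$ (since $P_i\cong\Z^2$ is abelian and $Q$ is normal with abelian-by-$\Z$ quotient, an index computation in $\pi_0/Q$ decides which), so that subcase is handled by elementary arithmetic in $\Z^2$ together with Lemma~\ref{lem:mpfiniteindex}. In the quasiconvex case, Theorem~\ref{thm:hypsubgroup} (via Lemma~\ref{lem:findretract}, searching finite-index subgroups of $\pi$ using Lemma~\ref{lem:allfiniteindexsubgroups}) produces a finite-index subgroup $\pi_1\le\pi$ containing $Q$, a finite presentation, and a retraction $\rho:\pi_1\to Q$. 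Meanwhile $P_i$ itself is a virtual retract as well (peripheral subgroups of hyperbolic $3$-manifold groups are quasiconvex), so $P_i\cap\pi_1$ is computable by Lemma~\ref{lem:finiteindexkernelcokernel} together with a Reidemeister--Schreier computation, reducing us to computing $(P_i\cap\pi_1)\cap Q$ inside $\pi_1$.

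The heart of the argument is then the following: if $\Gamma\le\pi_1$ is a retract with retraction $\rho$, and $K\le\pi_1$ is any finitely generated subgroup, then $K\cap\Gamma=\{k\in K:\rho(k)=k\}=\ker(\rho|_K:K\to\pi_1)\cdot(\text{stuff})$ — more precisely, $K\cap\Gamma$ is the equalizer of $\rho|_K$ and $\mathrm{id}|_K$, and since $\Gamma\cong\pi_1/\ker\rho$ is finitely presented, the set of $k\in K$ with $\rho(k)k^{-1}=e$ in $\pi_1$ is recursively enumerable \emph{and} co-recursively-enumerable once we know the target is, say, a surface group or free group with solvable word problem; combined with the fact (from $\Z^2$-peripherality and Lemma~\ref{lem:finiteindexkernelcokernel}) that we can a priori bound the intersection as a finite-index sub-thing of $P_i$, we get termination. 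Concretely, since $P_i\cong\Z^2$, every subgroup of $P_i\cap\pi_1$ is again a (possibly trivial) $\Z$, $\Z^2$, or finite-index subgroup thereof; one enumerates candidate finite generating sets of subgroups of $P_i\cap\pi_1$ (there are countably many, parametrized by pairs of vectors), checks each candidate generates something contained in $Q$ (membership in $Q\subset\pi_1$ is decidable: $Q$ is a retract of $\pi_1$, so $q\in Q\iff\rho(q)=q$, and equality in $\pi_1$ is decidable by residual finiteness, Lemma~\ref{lem:resfinite}), and simultaneously certifies maximality by checking that the coset representatives of the candidate in $P_i\cap\pi_1$ are \emph{not} in $Q$ — this terminates because the true answer $(P_i\cap\pi_1)\cap Q$ has finite index in $P_i\cap\pi_1$, so only finitely many cosets must be ruled out.

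The main obstacle, I expect, is the bookkeeping in the quasiconvex case: one must correctly track the passage between $\pi$, the finite-index subgroup $\pi_1$ furnished by the virtual-retract theorem, and $P_i$, and assemble a generating set for $P_i\cap\pi_1\cap Q$ inside the \emph{original} $\ll A\mid R\rr$ (not just inside $\pi_1$), which is where the coset-representative machinery of Lemma~\ref{lem:finiteindexkernelcokernel} and Lemma~\ref{lem:mpvirtualretract} has to be invoked carefully — the index of $\pi_1$ in $\pi$ means $P_i\cap Q$ need not equal $P_i\cap\pi_1\cap Q$, so one also needs to range over the finitely many cosets of $\pi_1\cap P_i$ in $P_i$ and test membership in $Q$ on each, using decidability of membership in $Q$. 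I do not anticipate any deep difficulty beyond correctly combining these already-established algorithmic lemmas with the geometric input (Theorems~\ref{thm:tameness} and~\ref{thm:hypsubgroup}) and the decidability of the word and membership problems in the pieces.
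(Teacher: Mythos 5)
There is a genuine gap, and it sits exactly where the paper has to use geometry. In your quasiconvex case you reduce to computing $(P_i\cap\pi_1)\cap Q$ inside $\pi_1$, where $\pi_1$ retracts onto $Q$, and your termination argument rests on the assertion that this intersection has finite index in $P_i\cap\pi_1\cong\Z^2$ (``we can a priori bound the intersection as a finite-index sub-thing of $P_i$''). That assertion is false: a relatively quasiconvex subgroup $Q$ can meet a peripheral torus subgroup in an infinite cyclic subgroup (for instance, $Q$ carried by an essential surface meeting the boundary torus in a boundary curve) or trivially. In those cases your enumeration of candidates $C\subseteq Q\cap P_0$ with the ``check the finitely many coset representatives are not in $Q$'' certificate never terminates, because the correct answer has infinitely many cosets in $P_0$ and no finite-index candidate will ever verify. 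Deciding membership of single elements of $P_0$ in $Q$ (via $q\in Q\iff\rho(q)=q$ and the word problem) is fine, but it does not by itself yield a finite generating set for $P_0\cap Q$ without some structural control. The paper supplies exactly this missing ingredient: writing $\Gamma\cap P=r(P_0)\cap P_0$ for the retraction $r$, it checks algorithmically whether $[r(P_0),P_0]=1$; if yes, maximality of the peripheral abelian subgroup $P_0$ forces $r(P_0)\subseteq P_0$, so the intersection is $r(P_0)$ and its generators are computable, while if no, commutative transitivity (centralizers of nontrivial elements of $\pi_0$ are abelian) forces $r(P_0)\cap P_0=1$. Without an argument of this kind your procedure cannot certify answers of infinite index, so the quasiconvex case is not established.

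A secondary error: in the virtual-fibre case you claim $Q\cap P_i$ is ``either trivial or all of $P_i$''. In fact $Q\cap P_0=\ker\bigl(p|_{P_0}\colon\Z^2\to\Z\bigr)$, which is never trivial and is generically infinite cyclic (the boundary torus of a fibred hyperbolic manifold meets the fibre subgroup in the boundary of the fibre surface). This case is easily repaired — compute $p$ on generators of $P_0$ and take the kernel by linear algebra, as the paper does — but as stated your dichotomy, and the membership-based bookkeeping built on it, is incorrect.
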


\begin{proof}
Let  $\Pi=(\ll A\mid R\rr,\{\ll X_i\mid S_i\rr, f_i\}_{i\in I})$ be a  finite decorated group presentation  and let $Y$ be a finite subset of $F(A)$. 
We suppose that $(\ll A\mid R\rr,\{\ll f_i(X_i)\rr\}_{i\in I})$ is isomorphic to $(\pi_1(N),\{\pi_1(T_i)\}_{i\in I})$ where $N$ is a hyperbolic 3-manifold and $T_i,i\in I$ are the boundary components of $N$. Let $i\in I$. We write $P=\ll f_i(X_i)\rr$ and $\Gamma=\ll Y \rr\subset \pi$. 
By Theorems \ref{thm:tameness} and \ref{thm:hypsubgroup} precisely  one of the following happens:
\bn
\item[(a)] either there exists either a
 finite-index subgroup $\pi_0$ of $\pi$ and a retraction $r\colon \pi_0\to\Gamma$, or
\item[(b)] there exists a finite-index subgroup $\pi_0$ and a homomorphism $p\colon \pi_0\to \Z$ such that $\Gamma=\ker p$.
\en
 In the former case the algorithms of Lemmas \ref{lem:allfiniteindexsubgroups} and \ref{lem:findretract}
 will find such $\pi_0$ and $r$. In the latter case again 
 a na\"ive search using the Reidemeister--Schreier algorithm together with
 Lemma \ref{lem:givesdesiredquotient} will find such $\pi_0$ and $p$. 
In either case, by Lemma \ref{lem:finiteindexkernelcokernel}  we can  compute generators for $P_0=\pi_0\cap P$.

In  case (b) we have $\Gamma\cap P=\ker p|_ {P_0}$ which can be computed by standard linear algebra.
In case (a) we note that $\Gamma\cap P=r(P_0)\cap P_0$.   Using the solution to the word problem in $\pi$ (see Lemma \ref{lem:resfinite}),  we can determine whether or not all generators of $r(P_0)$ and $P_0$ commute, i.e.\ we can determine whether or not $[r(P_0),P_0]=1$. 

First suppose that  $[r(P_0),P_0]=1$. Recall that $P_0$ is the fundamental group of a boundary torus of a hyperbolic 3-manifold. It is  well-known, see e.g. \cite[Theorem~3.1]{AFW12}, that this implies that $P_0$ is a maximal abelian  subgroup of $\pi_0$. It now follows that $r(P_0)\subseteq P_0$, which implies that  $\Gamma\cap P=r(P_0)$. 

Now suppose that  $[r(P_0),P_0]\ne 1$. The fact that $N$ is hyperbolic implies
by \cite[Corollary 3.11]{AFW12} that   the centralizer of any non-identity element in $\pi_0$ is abelian. It now follows that $r(P_0)\cap P_0=1$ and so $\Gamma\cap P=1$.
\end{proof}

We now also consider the following class of decorated groups:
\[ \PP_{\op{product}}=
\ba{c}\mbox{all decorated groups isomorphic to  $(\pi_1(S^1\times \Sigma),\{\pi_1(T_i)\}_{i\in I})$, where}
\\ \mbox{$\Sigma$ is a  surface and $T_i,i\in I$ are the boundary components of $S^1\times \Sigma$.}\ea\]
We then have the following lemma.

\begin{lemma}\label{lem: Intersections for product manifolds}
There exists an algorithm which takes as input a 
 finite decorated group presentation $\Pi=(\ll A\mid R\rr,\{\ll X_i\mid S_i\rr, f_i\}_{i\in I})$ and a finite subset $Y$ of $F(A)$ and which,
 if $\Pi$ represents a subdecoration for a decorated group in $\PP_{\op{product}}$,  gives for each $i\in I$ as output a finite generating set for $\ll f_i(X_i) \rr\cap \ll Y\rr$  as a subgroup of $\ll A\mid R\rr$.
\end{lemma}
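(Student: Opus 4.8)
The plan is to recognise the vertex group as the direct product $\langle t\rangle\times\pi_1(\Sigma)$ (with $t$ the $S^1$-fibre class), and thereby reduce the whole problem to routine computations in a free group. By hypothesis $\Pi$ presents a subdecoration of $(\pi_1(S^1\times\Sigma),\{\pi_1(T_j)\}_{j})$ for some compact surface $\Sigma$. If the index set $I$ is empty there is nothing to do, so we may assume $\Sigma$ has non-empty boundary; then $F:=\pi_1(\Sigma)$ is free on an explicit basis, $\pi_1(S^1\times\Sigma)=\langle t\rangle\times F$ with $t$ central, and each edge group is carried by the given isomorphism to a conjugate of a peripheral subgroup $\langle t\rangle\times\langle c\rangle$, where $c\in F$ is the class of a boundary curve of $\Sigma$ (the word $c$ need not be primitive, e.g.\ for $S^1\times$ M\"obius band, and primitivity will play no role). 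Using the classification of compact surfaces we enumerate candidate presentations of $\Z\times\pi_1(\Sigma)$, marking $t$ and the boundary curves $c_1,\dots,c_b$ as explicit words; running Lemma \ref{lem:findiso} we search for an isomorphism $\phi$ from $\langle A\mid R\rangle$ onto such a presentation, and simultaneously, for each $i\in I$, for an index $j(i)$ and a conjugator $g_i$, verifying with Lemma \ref{lem:samesubgroup} that $\phi(\langle f_i(X_i)\rangle)=g_i\big(\langle t\rangle\times\langle c_{j(i)}\rangle\big)g_i^{-1}$. Since the required data exists, this search terminates. After conjugating $\phi(Y)$ by $g_i^{-1}$, computing the intersection described below, and transporting the answer back by $g_i$ and $\phi^{-1}$ (the latter also supplied by Lemma \ref{lem:findiso}), we are reduced to the following: in $\pi=\langle t\rangle\times F$ with $F$ free, given a finite $Y\subset\pi$ and an explicit word $c\in F$, compute a finite generating set for $\Gamma\cap P$, where $\Gamma=\langle Y\rangle$ and $P=\langle t\rangle\times\langle c\rangle$.

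For this, let $q\colon\pi\to F$ be the projection killing $t$ and $p\colon\pi\to\Z$ the one recording the $t$-exponent; both are evaluated by just reading off a word. Put $\Delta=q(\Gamma)=\langle q(Y)\rangle$. Standard Stallings-graph algorithms for free groups give a free basis $d_1,\dots,d_s$ of $\Delta$, let us rewrite each $q(y_l)$ as a word in the $d_j$ and each $d_j$ as a word in the $q(y_l)$, and compute $\langle c\rangle\cap\Delta$, which is either trivial or equal to $\langle c^{d}\rangle$ for an effectively determined $d\ge1$ (via the pullback of the two core graphs). Choosing for each $j$ a word in $Y$ realising the rewriting of $d_j$ yields a lift $\widehat d_j\in\Gamma$ with $q(\widehat d_j)=d_j$; set $\Gamma'=\langle\widehat d_1,\dots,\widehat d_s\rangle$. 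Since the $d_j$ form a \emph{free} basis of $\Delta$, any word in them representing $1$ freely reduces, and $p$ is invariant under free reduction, so $q$ is injective on $\Gamma'$ and $q|_{\Gamma'}\colon\Gamma'\to\Delta$ is an isomorphism. Comparing each $y_l$ with the word in the $\widehat d_j$ obtained from the rewriting of $q(y_l)$ shows that $y_l$ times the inverse of that word equals $t^{e_l}$ for a computable $e_l\in\Z$, whence $\Gamma=\langle t^{g}\rangle\times\Gamma'$ with $g=\gcd(e_1,\dots,e_k)$ (and $g=0$ if all $e_l$ vanish). Consequently $\Gamma\cap\langle t\rangle=\langle t^{g}\rangle$, and an element $t^{gk}\gamma'$ of $\Gamma$ lies in $P$ if and only if $q(\gamma')\in\langle c\rangle\cap\Delta$, i.e.\ if and only if $\gamma'$ is a power of $\widehat{c^{d}}:=(q|_{\Gamma'})^{-1}(c^{d})$ — the lift into $\Gamma'$ obtained by rewriting $c^{d}$ in the $d_j$ and substituting the $\widehat d_j$ — (or $\gamma'=1$, when $\langle c\rangle\cap\Delta=1$). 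Thus $\Gamma\cap P$ is generated by $t^{g}$ together with $\widehat{c^{d}}$, each an explicit word in $Y$; transporting this back by the conjugations and $\phi^{-1}$ gives the required output in $F(A)$.

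The computations inside $F$ are routine; the only point needing genuine care is the central direction. A na\"ive enumeration can exhibit elements $t^{m}\in\Gamma$ but can never certify that $\Gamma\cap\langle t\rangle$ is trivial, and a priori one only knows $q(\Gamma\cap P)\subseteq\langle c\rangle\cap\Delta$ rather than equality. Both gaps are closed by the splitting $\Gamma=\langle t^{g}\rangle\times\Gamma'$: because $\Delta$ is free the surjection $\Gamma\to\Delta$ splits, which pins down $\Gamma\cap\langle t\rangle$ exactly (as the $\gcd$ of the $p$-discrepancies) and forces $\Gamma\cap P$ to surject onto $\langle c\rangle\cap\Delta$. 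Producing this splitting, together with the accompanying $\gcd$, is the one non-routine ingredient of the argument; everything else is bookkeeping around Lemma \ref{lem:findiso} and standard free-group algorithmics.
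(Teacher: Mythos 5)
Your proof is correct, but it takes a genuinely different route from the paper. The paper's argument is structured to parallel the hyperbolic case: it invokes Scott's theorem that finitely generated subgroups of surface groups are virtual retracts, deduces the same for $\Z\times\pi_1(\Sigma)$, searches blindly (Lemmas \ref{lem:allfiniteindexsubgroups} and \ref{lem:findretract}) for a finite-index subgroup $\pi_0$ and a retraction $r\colon\pi_0\to\Gamma$, computes $P_0=\pi_0\cap P$, and then evaluates $\Gamma\cap P=r(P_0)\cap P_0$ by a case split on whether $[r(P_0),P_0]=1$, using maximality of the abelian peripheral subgroup in one case and commutative transitivity plus linear algebra in the abelianization of $\pi_0$ in the other. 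You instead observe that when $I\neq\emptyset$ the fibre surface has boundary, so the vertex group is $\langle t\rangle\times F$ with $F$ free; after an unbounded but terminating recognition search for an isomorphism onto a marked presentation (fibre $t$ and boundary words $c_j$), with the peripheral identification certified via Lemma \ref{lem:samesubgroup} and correctness guaranteed by the decorated-isomorphism hypothesis, everything reduces to Stallings-graph computations. The key step, which you carry out correctly, is the computable splitting $\Gamma=\langle t^{g}\rangle\times\Gamma'$, where $\Gamma'$ is generated by lifts of a free basis of $q(\Gamma)$ (injectivity of $q|_{\Gamma'}$ follows since any relation among a free basis is a consequence of free reduction, which preserves the $t$-exponent) and $g$ is the gcd of the $t$-discrepancies of the given generators; this pins down $\Gamma\cap\langle t\rangle$ exactly and yields $\Gamma\cap P=\langle t^{g},\widehat{c^{d}}\rangle$ with $\langle c\rangle\cap q(\Gamma)=\langle c^{d}\rangle$ computed from the pullback of core graphs. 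What each approach buys: yours avoids Scott's theorem, virtual retracts and the search for a retraction entirely, is self-contained free-group algorithmics with explicit output words, and handles the degenerate peripheral cases ($c$ trivial or a proper power, as for the disk or M\"obius band) without touching any maximal-abelian claim; the paper's version requires no recognition of the marked product structure and runs uniformly in parallel with the hyperbolic case (Lemma \ref{lem: Intersections for hyperbolic manifolds}), which is why the authors chose it. One small point to make explicit in your write-up: the certification step must also certify that $\phi$ is an isomorphism and produce its inverse, but both are exactly what the procedure of Lemma \ref{lem:findiso} supplies, as you note.
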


\begin{proof}
Let  $(\ll A\mid R\rr,\{\ll X_i\mid S_i\rr, f_i\}_{i\in I})$ be a finite decorated group presentation and $Y$ a  finite subset  of $F(A)$. We suppose that $(\ll A\mid R\rr,\{\langle f_i(X_i)\rr\}_{i\in I})$ is isomorphic to $(\pi_1(S^1\times \Sigma),\{\pi_1(T_i)\}_{i\in I})$ where  $\Sigma$ is a surface and $T_i, i\in I$ are some  boundary components of $S^1\times \Sigma$. Let $i\in I$.
We write  $\pi=\ll A\mid R\rr$, $P=\ll f_i(X_i)\rr$ and $\Gamma=\ll Y\rr$.

By Theorem \ref{thm:surfaceretract}  every finitely generated subgroup of the  surface group $\pi_1(\Sigma)$ is a virtual retract.  It follows easily that every finitely generated subgroup of $\pi_1(S^1\times \Sigma)=\Z\times \pi_1(\Sigma)$ is a virtual retract.  Therefore, the algorithms of 
Lemmas \ref{lem:allfiniteindexsubgroups} and \ref{lem:findretract} will find a finite-index subgroup $\pi_0$ of $\pi$ and a retraction $r:\pi_0\to\Gamma$.  
As in the proof of Lemma \ref{lem: Intersections for hyperbolic manifolds} we can  compute generators for $P_0=\pi_0\cap P$.

Again, we note that $\Gamma\cap P=r(P_0)\cap P_0$.  An explicit computation again determines whether or not $[r(P_0),P_0]=1$. 
  If so then, just as before, because $P_0$ is maximal abelian we have $r(P_0)\subseteq P_0$ and so $r(P_0)=\Gamma\cap P$.  If not then, by the commutative transitivity of $\pi_1(\Sigma)$, we deduce that $\Gamma\cap P=r(P_0)\cap P_0$ is contained in the centre $Z_0$ of $\pi_0$ and so it suffices to compute $r(P_0)\cap Z_0$.  But now $r(P_0)\cap Z_0$ can be seen in the abelianization of $\pi_0$, and so can be computed by elementary linear algebra.
\end{proof}

\begin{lemma}\label{lem: Intersections for SF manifolds}
There exists an algorithm which takes as input a 
 finite decorated group presentation $\Pi=(\ll A\mid R\rr,\{\ll X_i\mid S_i\rr, f_i\}_{i\in I})$ and a finite subset $Y$ of $F(A)$ and which,
 if $\Pi$ represents a decorated group in $\PP_{\op{sfs}}$,  gives for each $i\in I$ as output a finite generating set for $\ll f_i(X_i) \rr\cap \ll Y\rr$  as a subgroup of $\ll A\mid R\rr$.
\end{lemma}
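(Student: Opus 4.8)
The plan is to reduce the Seifert fibred case to the product case handled in Lemma~\ref{lem: Intersections for product manifolds}, by passing to a finite cover. First I would recall the standard structure theory: every Seifert fibred manifold $N$ with non-empty boundary has a finite cover $\widetilde N$ which is a product $S^1\times\Sigma$ for some surface $\Sigma$ with boundary; this cover can be chosen to respect the fibration, so that the boundary tori of $\widetilde N$ cover the boundary tori of $N$. Since $\PP_{\op{sfs}}$ is recursively enumerable (Theorem~\ref{thm:hypsfsenumerable}) and we can enumerate all finite-index subgroups (Lemma~\ref{lem:allfiniteindexsubgroups}), we may search until we find a finite-index subgroup $\pi_0$ of $\pi=\ll A\mid R\rr$ which, together with the subgroups $\pi_0\cap f_i(X_i)$ arising from the decoration, is recognized via Lemma~\ref{lem:findiso} as (a subdecoration of) a decorated group in $\PP_{\op{product}}$. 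Here one has to be a little careful: we want not merely that $\pi_0$ is isomorphic to some $\pi_1(S^1\times\Sigma)$, but that each edge subgroup $\pi_0\cap f_i(X_i)$ corresponds under the isomorphism to (a conjugate of) a peripheral $\Z^2$; this can be arranged because the cover was chosen compatibly with the Seifert structure, and the matching can be certified algorithmically by Lemma~\ref{lem:findiso} applied to decorated group presentations.

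Once such a $\pi_0$ and an identification with a subdecoration of an element of $\PP_{\op{product}}$ has been found, I would write $P=\ll f_i(X_i)\rr$ and $\Gamma=\ll Y\rr$ as before, compute generators for $P_0=\pi_0\cap P$ and $\Gamma_0=\pi_0\cap\Gamma$ using Lemma~\ref{lem:finiteindexkernelcokernel}, and then invoke Lemma~\ref{lem: Intersections for product manifolds} to compute a finite generating set for $P_0\cap\Gamma_0\subseteq\pi_0$. The only remaining point is to recover $P\cap\Gamma$ from $P_0\cap\Gamma_0$. For this one uses that $P\cap\Gamma$ contains $P_0\cap\Gamma_0$ as a finite-index subgroup (indeed of index bounded by $[\pi:\pi_0]$), so $P\cap\Gamma$ is obtained from $P_0\cap\Gamma_0$ by adjoining finitely many coset representatives; these can be located by a naive search through $\pi$, using the solution to the word problem (Lemma~\ref{lem:resfinite}) to test membership of candidate elements in both $P$ and $\Gamma$ via the algorithm of Lemma~\ref{lem:samesubgroup}, and the search terminates because the index is finite and one can certify in advance an upper bound for it.

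The main obstacle I anticipate is the bookkeeping in the first step: producing a finite cover of a Seifert fibred manifold that is a product \emph{and} carries the prescribed peripheral decoration in a form that can be algorithmically recognized. The existence of a product finite cover is classical, but one needs it in the decorated category, i.e.\ one needs the covering to send boundary tori to boundary tori and to be detectable purely from the group-theoretic data given in $\Pi$. The cleanest way around this is to note that since we are only asked to produce \emph{some} algorithm, we may run two searches in parallel: one enumerating finite-index subgroups $\pi_0\le\pi$ together with candidate identifications of $(\pi_0,\{\pi_0\cap f_i(X_i)\})$ with subdecorations of members of $\PP_{\op{product}}$ (certified by Lemma~\ref{lem:findiso}), and simultaneously the computation above; the structure theory of Seifert fibred spaces guarantees that a valid $\pi_0$ eventually appears, so the first search terminates, and then the rest is routine. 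A secondary, purely technical point is verifying that $P_0\cap\Gamma_0$ really does have finite index in $P\cap\Gamma$ with an a priori computable bound, which follows from $[\,P\cap\Gamma : P_0\cap\Gamma_0\,]\le[\pi:\pi_0]$.
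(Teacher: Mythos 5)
Your reduction to the product case is essentially the paper's argument: pass to the finite cover $S^1\times\Sigma$ of the Seifert fibred manifold, find a suitable finite-index subgroup $\pi_0$ and a decorated identification by a parallel search (Lemmas \ref{lem:allfiniteindexsubgroups} and a decorated version of \ref{lem:findiso}), compute $P_0=P\cap\pi_0$ and $\Gamma_0=\Gamma\cap\pi_0$ via Lemma \ref{lem:finiteindexkernelcokernel}, and apply Lemma \ref{lem: Intersections for product manifolds} to get generators of $P_0\cap\Gamma_0$. The genuine gap is in your final step, recovering $P\cap\Gamma$ from $P_0\cap\Gamma_0$. The tools you invoke there (the word problem, Lemma \ref{lem:samesubgroup}) give only \emph{positive} certificates: you can confirm that a candidate element lies in $P$ or in $\Gamma$ when it does, but you can never certify that it does not. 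Hence your ``naive search'' has no stopping rule: the a priori bound $[\,P\cap\Gamma:P_0\cap\Gamma_0\,]\le[\pi:\pi_0]$ does not tell you the \emph{exact} index, so at no finite stage can you conclude that the elements of $P\cap\Gamma$ you have accumulated generate all of it --- a further element could appear arbitrarily late in the enumeration, and nothing in your toolkit can rule it out. Termination of each positive test is not the issue; deciding non-membership is, and the word problem cannot do that (deciding membership in $\Gamma\subset\pi$ is precisely the problem the whole paper is addressing, and it is not reducible to Lemma \ref{lem:resfinite}).

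The paper closes exactly this gap with two ingredients you omit. First, since $P\cap\Gamma$ is a subgroup of $P\cong\Z^2$ containing $P_0\cap\Gamma_0$ with finite index, there are only \emph{finitely many} candidate subgroups, and they can be listed explicitly from a generating set of $P_0\cap\Gamma_0$ inside $\Z^2$. Second, membership in the finitely generated subgroups $P$ and $\Gamma$ of $\pi$ is genuinely \emph{decidable}, because fundamental groups of Seifert fibred manifolds are subgroup separable (Theorem \ref{thm:sfslerf}): separability supplies the negative certificates (finite quotients) that the word problem lacks, exactly as in the remark after Lemma \ref{lem:resfinite} in Section \ref{section:mainthm}. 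One then simply tests, for each of the finitely many candidates, whether its generators lie in both $P$ and $\Gamma$, and reads off $P\cap\Gamma$. Your argument becomes correct if you replace the word-problem-based ``naive search'' by this separability-based membership test applied to the finite list of candidate subgroups of $\Z^2$, rather than to an unbounded search through $\pi$.
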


\begin{proof}
Let  $(\ll A\mid R\rr,\{\ll X_i\mid S_i\rr, f_i\}_{i\in I})$ be a finite decorated group presentation and $Y$ a  finite subset  of $F(A)$. We suppose  that $(\ll A\mid R\rr,\{\langle f_i(X_i)\rr\}_{i\in I})$ is isomorphic to $(\pi_1(N),\{\pi_1(T_i)\}_{i\in I})$ where $N$ is a  Seifert fibered space and $\{T_i\}_{i\in I}$ are the boundary components of $N$. Let $i\in I$. We write  $\pi=\ll A\mid R\rr$, $P=\ll f_i(X_i)\rr$ and $\Gamma=\ll Y\rr$.

By \cite[Theorem~11.10]{He76} there exists a finite cover of $N$ which is a product $S^1\times \Sigma$.
Using Lemma \ref{lem:allfiniteindexsubgroups} we now enumerate all finite-index subgroups of $\pi$.
We can furthermore enumerate  all fundamental groups of products $S^1\times \Sigma$, $\Sigma$ a surface,
and using an obvious generalization of Lemma \ref{lem:findiso}
we will eventually find a finite-index subgroup $\pi_0$ of $\pi$, a presentation $\ll A\mid R\rr$
of $\pi_1(S^1\times \Sigma)$, $\Sigma$ a surface,
and an isomorphism $g\colon \pi_0\to \Gamma:=\ll A\mid R\rr$ such that $g(P\cap \pi_0)$ is the fundamental group of a boundary component of $S^1\times \Sigma$.

Using Lemma \ref{lem:finiteindexkernelcokernel} we can find 
 a generating set $Y_0$ for $\Gamma\cap \pi_0$, a finite presentation $\ll X_0\mid S_0\rr$,
 and an isomorphism $f_0\colon \ll X_0\mid S_0\rr\to P \cap \pi_0$.
We now apply the algorithm of Lemma \ref{lem: Intersections for product manifolds} to the finite decorated group presentation 
$(\ll A\mid R\rr, \{\ll X_0\mid S_0\rr,f_0\})$ and the finite set $Y_0$.
The algorithm then gives us a  generating set
for $(\Gamma \cap \pi_0)\cap (P\cap \pi_0)=(\Gamma \cap \pi_0)\cap P$. 

Note that $(\Gamma \cap \pi_0)\cap P$ is a finite index subgroup of $\Gamma\cap P$, which in turn is a subgroup of $P\cong \Z^2$. It is now straightforward to list the (finitely many) subgroups of $P\cong \Z^2$ 
which contain $(\Gamma\cap \pi_0)\cap P$ as a finite index subgroup. For each of these subgroups 
we pick a finite number of generators and using the fact that $\pi$ is subgroup separable,
see Theorem \ref{thm:sfslerf},  we can check whether the generators lie in $P$ and in $\Gamma$. 
\end{proof}

We  are now ready to prove that  the class $\PP$ of decorated  groups satisfies Condition V. More precisely,
we have the following proposition.

\begin{proposition}\label{prop:intersectionsboundary}
There exists an algorithm which takes as input a 
 finite decorated group presentation $\Pi=(\ll A\mid R\rr,\{\ll X_i\mid S_i\rr, f_i\}_{i\in I})$ and a finite subset $Y$ of $F(A)$ and which,
 if $\Pi$ represents a decorated group in $\PP$,  gives for each $i\in I$ as output a finite generating set for $\ll f_i(X_i) \rr\cap \ll Y\rr$  as a subgroup of $\ll A\mid R\rr$.
\end{proposition}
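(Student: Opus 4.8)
The plan is to deduce the proposition from the three cases that together constitute $\PP=\PP_{\op{hyp}}\cup\PP_{\op{sfs}}\cup\PP_{\op{cl}}$. If $\Pi$ represents a decorated group in $\PP_{\op{cl}}$, then by definition its decoration is empty, i.e.\ $I=\varnothing$, so there is nothing to output. The cases $\Pi\in\PP_{\op{hyp}}$ and $\Pi\in\PP_{\op{sfs}}$ are handled respectively by Lemma \ref{lem: Intersections for hyperbolic manifolds} and Lemma \ref{lem: Intersections for SF manifolds}. Thus the only genuinely new point is to recognise, given a finite decorated group presentation $\Pi$ that is promised to represent a decorated group in $\PP$, which of the three classes it belongs to, so that the appropriate one of these algorithms can be run. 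Some care is needed here, because the algorithms of Lemmas \ref{lem: Intersections for hyperbolic manifolds} and \ref{lem: Intersections for SF manifolds} are only guaranteed to halt when their input is of the corresponding geometric type.

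To carry this out, recall from Theorem \ref{thm:hypsfsenumerable} that each of $\PP_{\op{hyp}}$, $\PP_{\op{sfs}}$ and $\PP_{\op{cl}}$ is recursively enumerable, and note that Lemma \ref{lem:findiso} generalises in the obvious way to finite decorated group presentations: an isomorphism of decorated groups is certified exactly as there, together with a search for elements conjugating the images of the edge groups appropriately. We then dovetail the following search. We simultaneously enumerate finite decorated group presentations $\Pi'$ of decorated groups drawn from $\PP_{\op{hyp}}$, $\PP_{\op{sfs}}$ and $\PP_{\op{cl}}$, and for each such $\Pi'$ we run the decorated isomorphism finder on $(\Pi,\Pi')$. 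If $\Pi$ represents a decorated group in $\PP$, then after finitely many steps some $\Pi'$ is produced together with a decorated isomorphism $\Pi\cong\Pi'$. Since each of the three classes is closed under isomorphism, the decorated group represented by $\Pi$ lies in whichever of the three classes $\Pi'$ was enumerated from (it may lie in more than one, in which case any such choice works).

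Having thereby identified a class of $\PP$ containing the decorated group represented by $\Pi$, we invoke the corresponding algorithm: Lemma \ref{lem: Intersections for hyperbolic manifolds} for $\PP_{\op{hyp}}$, Lemma \ref{lem: Intersections for SF manifolds} for $\PP_{\op{sfs}}$, and the trivial algorithm (output nothing, since $I=\varnothing$) for $\PP_{\op{cl}}$. In each case the hypothesis under which the invoked algorithm is correct now holds, so it halts and returns a finite generating set for $\ll f_i(X_i)\rr\cap\ll Y\rr$ for every $i\in I$, which is what is required. The substantial work was already carried out in Lemmas \ref{lem: Intersections for hyperbolic manifolds} and \ref{lem: Intersections for SF manifolds}, where the real obstacle is that the intersection of an edge group (a copy of $\Z^2$) with $\ll Y\rr$ is not directly visible and must be extracted after passing to a finite-index subgroup over which $\ll Y\rr$ becomes a retract or a fibre kernel --- information furnished by the Tameness and Virtually Compact Special Theorems in the hyperbolic case and by Scott's virtual-retract theorem in the Seifert-fibred case --- and then read off by linear algebra; here only the clerical step of recognising the geometric type of $\Pi$ remains, and it is precisely Theorem \ref{thm:hypsfsenumerable} that makes this step algorithmic.
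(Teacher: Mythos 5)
Your proposal is correct and follows essentially the same route as the paper: dispose of the $\PP_{\op{cl}}$/degenerate case, use the recursive enumerability of $\PP_{\op{hyp}}$ and $\PP_{\op{sfs}}$ (Theorem \ref{thm:hypsfsenumerable}) together with a decorated-group version of Lemma \ref{lem:findiso} to certify which class $\Pi$ belongs to, and then invoke Lemma \ref{lem: Intersections for hyperbolic manifolds} or Lemma \ref{lem: Intersections for SF manifolds} accordingly. The only cosmetic difference is that the paper handles the degenerate case by testing (via the word problem) whether $f_i(X_i)$ generates the trivial subgroup rather than by observing that $I=\varnothing$ in the closed case, which does not affect correctness.
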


\begin{proof}
 Let $(\ll A\mid R\rr,\{\ll X_i\mid S_i\rr, f_i\}_{i\in I})$  be a 
    finite decorated group presentation which represents a decorated group in $\PP$ and let  $Y\subset F(A)$ be a finite set. Let $i\in I$.
By the solution to the word problem for $\ll A\mid R\rr$ we can determine whether or not $f_i(X_i)$ generates the trivial group. If it does, then there is nothing to show. 

Now suppose that $f_i(X_i)$ does not generate the trivial group. By definition of $\PP$ it follows that $\Pi$
represents an element in $\PP_{\op{hyp}}$ or it represents an element in $\PP_{\op{sfs}}$.

Using Theorem \ref{thm:hypsfsenumerable} and an obvious generalization of Lemma \ref{lem:findiso} 
we can now  certify that  $(\ll A\mid R\rr, \{\ll f_i(X_i)\rr\}_{i\in I})$ is isomorphic to a subdecoration  of a decorated group in $\PP_{\op{hyp}}$, or we can certify that $(\ll A\mid R\rr,X)$ is isomorphic to a subdecoration of a decorated group in $\PP_{\op{sfs}}$.
(It follows from basic facts in 3-manifold topology that only one of the two cases can occur, but this fact is irrelevant for the proof of this proposition.)
 In the former case we now apply the algorithm
from Lemma  \ref{lem: Intersections for hyperbolic manifolds}, while in the latter case we apply the algorithm from Lemma \ref{lem: Intersections for SF manifolds}
\end{proof}

\section{Alternative approaches and open questions}\label{section:alternative}

In the proof of our main theorem we used two big theorems on fundamental groups of hyperbolic 3-manifolds: the Tameness Theorem of Agol \cite{Ag04} and Calegari--Gabai \cite{CG06}
and Theorem \ref{thm:hypsubgroup} which is a consequence of the Virtually Compact Special Theorem of Agol \cite{Ag13} and Wise \cite{Wi12a,Wi12b}.  The Tameness Theorem is indispensable: it is needed to control geometrically infinite subgroups.  However, it is quite possible that one could also  prove Theorem \ref{mainthm} without appealing to  the  Virtually Compact Special Theorem.  
For example, Gitik \cite{Gi96} and Kapovich \cite{Ka96}
showed that the Membership Problem is solvable for quasiconvex subgroups of word-hyperbolic groups.
It is now straightforward to see that one can prove our main theorem by appealing to this result
and to appealing to  Theorem \ref{thm:hypsubgroup} only for hyperbolic 3-manifolds with non-empty boundary.
It is now an interesting question whether one can also replace Theorem \ref{thm:hypsubgroup} by more general methods from geometric group theory.

In the following we consider the class of decorated presentations of \emph{toral relatively hyperbolic groups}. That is, we consider
\[ \HH=
\ba{c}\mbox{all decorated groups isomorphic to  $(\pi,\{\Gamma_i\}_{i\in I})$, where $\pi$ is a group which}
\\ \mbox{is\,hyperbolic\,relative\,to\,the\,finite\,collection\,of\,\,f.g.\,abelian\,subgroups\,$\{\Gamma_i\}_{i\in I}$.}\ea \]
It is now fairly straightforward to see that one can prove our main theorem without referring to Theorem \ref{thm:hypsubgroup} if one can give affirmative answers to the following three questions.

The first is a generalization of the above-mentioned work of Gitik and Kapovich to the toral relatively hyperbolic setting.

\begin{question}\label{qu: RQC membership}
Does there exist an algorithm which takes as input a 
 finite decorated group presentation $\Pi=(\ll A\mid R\rr,\{\ll X_i\mid S_i\rr, f_i\}_{i\in I})$ and a finite subset $Y$ of $F(A)$ and which,
 if $\Pi$ represents a  decorated group in $\HH$
 and if $Y$  generates a relatively quasi-convex subgroup of $\ll A\mid R\rr$, determines whether or not a given element in $\ll A\mid R\rr$
 lies in  $\langle Y\rangle\subset \ll A\mid R\rr$?
\end{question}

The second question generalizes the first question to double cosets.

\begin{question}\label{qu: RQC double coset membership}
Does there exist an algorithm which takes as input a finite decorated group presentation $\Pi=(\ll A\mid R\rr,\{\ll X_i\mid S_i\rr, f_i\}_{i\in I})$, a finite subset $Y$ of $F(A)$ and an index $i\in I$ and which, if $\Pi$ represents a  decorated group in $\HH$ and if $Y$  generates a relatively quasi-convex subgroup of $\ll A\mid R\rr$, determines whether or not a given element in $\ll A\mid R\rr$  lies in the double coset $\langle f_i(X_i)\rangle \langle Y\rangle\subset \ll A\mid R\rr$?
\end{question}

The final question asks for an algorithm to compute the intersection of a relatively quasiconvex subgroup and a maximal parabolic subgroup.

\begin{question}\label{qu: RQC intersection}
Does there exist an algorithm which takes as input a finite decorated group presentation $\Pi=(\ll A\mid R\rr,\{\ll X_i\mid S_i\rr, f_i\}_{i\in I})$ and a finite subset $Y$ of $F(A)$ and which, if $\Pi$ represents a  decorated group in $\HH$,  gives for each $i\in I$ as output a finite generating set for $\ll f_i(X_i) \rr\cap \ll Y\rr$  as a subgroup of $\ll A\mid R\rr$?
\end{question}

\end{document}